\newcommand{\N} {\mathbb{N}}
\newcommand{\R} {\mathbb{R}}
\newcommand{\C} {\mathbb{C}}
\newcommand{\e} {\varepsilon}
\newcommand{\di}[1]{\operatorname{d}\!#1}
\newtheorem{proposition}{Proposition}
\newtheorem{theorem}{Theorem}
\newtheorem{lemma}{Lemma}
\newtheorem{remark}{Remark}
\title{{\bf Controllability of ensembles of linear dynamical systems}}
\author{Michael Sch\"onlein and Uwe Helmke\footnote{(helmke,schoenlein)@mathematik.uni-wuerzburg.de}\\[2ex]
Institute for Mathematics\\
University of W\"urzburg\\
Emil-Fischer Stra\ss e 40\\
97074 W\"urzburg\\
Germany
}
\begin{document}

\maketitle

\begin{abstract}
We investigate the task of controlling ensembles of
initial and terminal state vectors of parameter-dependent linear
systems by applying  parameter-independent open loop controls. Necessary, as well as sufficient, conditions for ensemble
controllability are established, using tools from complex
approximation theory. For real analytic families of linear systems it is shown that ensemble
controllability holds only for systems with at most two
independent 
parameters. We apply the results to networks of linear systems and address the question of open-loop robust synchronization.\\[3ex]
{\bf Keywords}: Polynomial approximations, ensemble control, parameter-dependent linear systems. 
\end{abstract}

%

\section{Introduction}\label{sec:intro}

Driven by recent engineering applications the task of controlling
ensembles of system by open loop controls has gained considerable
attraction. The motivation for this study originates, for instance, from quantum control \cite{li2006control}, the control of spatially invariant systems such as platoons \cite{bamieh2002distributed, Curtain2009}, the control of partial differential equations \cite{coron2007book}, and flocks of systems \cite{brockett2010control}.

From a functional analytic point of view, the problem of ensemble
controllability is equivalent to approximate controllability of infinite-dimensional
systems defined on Banach or Hilbert spaces~\cite{scl2014}. Standard
characterizations of approximate controllability in Hilbert spaces
can be found in \cite{curtainzwart95,fuhrmann1972weak}. However, the
results in \cite{curtainzwart95,Jacob2006diagonal} are, except for
very special cases, not easily applicable for ensemble control
as they depend on the existence of a Riesz basis of eigenvectors. A
new function theory approach to uniform ensemble
controllability has been developed in \cite{scl2014}, using classical
approximation theoretic results, such as the Stone-Weierstrass Theorem \cite{davis63} and Mergelyan's Theorem \cite{walsh65}.

In this paper, recent results on ensemble control by
\cite{scl2014,pamm2014,mtns2014} are extended 
in several directions. The necessary and sufficient conditions for
ensemble control in \cite{scl2014} are
extended to 
finite unions of disjoint parameter intervals. Somewhat surprisingly, we show that
ensemble controllability of real analytic families of systems holds
only if the parameter space is at most two-dimensional. New explicit
characterizations of uniform ensemble controllability are derived for a special class
of one-parameter families of systems. Finally, we discuss an application to 
robust synchronization for circular interconencted homogeneous
networks.  

\newpage

\section{Polynomial characterizations of uniform ensemble controllability}

We consider parameter-dependent linear time-invariant
systems, described in continuous-time by

\begin{equation}\label{eq:sys-cont}
 \begin{split}
 \Sigma(\theta):\,
 \begin{cases} 
  \tfrac{\partial}{\partial t}x(t,\theta)=A(\theta)x(t,\theta)+B(\theta)u(t)&\\
  x(0,\theta) = 0,&
\end{cases}
 \end{split}
\end{equation}
and in discrete-time, by
\begin{equation}\label{eq:sys-discrete}
 \begin{split}
  \Sigma(\theta):\,
 \begin{cases} 
  x_{t+1}(\theta)=A(\theta)x_t(\theta)+B(\theta)u_t&\\
  x_0(\theta) = 0.&
 \end{cases}
 \end{split}
\end{equation}
Since we want explore reachability properties of these systems, the
  initial state is taken to be zero. Note that in the continuous-time
  case there is no difference between reachability and
  controllability. In the sequel we will only use the term
  controllability, with the caveat that for discrete-time systems this
  has to be interpreted as reachability. Recall that a system
  (\ref{eq:sys-cont}), or (\ref{eq:sys-discrete}), 
is reachable for a fixed parameter $\theta$ if and only if the Kalman matrix has full rank, i.e. 
\begin{align*}
 \operatorname{rank} 
 \begin{pmatrix} B(\theta)& A(\theta)B(\theta) &  \cdots & A(\theta)^{n-1}B(\theta) \end{pmatrix}
=n.
\end{align*}
Throughout the paper we assume that the parameter $\theta$ varies in a compact subset
$\textbf{P} \subset \R^d$ and the system matrices
$A(\theta)\in\mathbb{R}^{n\times n}, B(\theta)\in \mathbb{R}^{n\times
  m}$ are continuous functions in $\theta$, i.e. $A \in C(\mathbf{P},\mathbb{R}^{n\times n})$ and $ B\in C(\mathbf{P},\mathbb{R}^{m \times n})$. 
We refer to such parameter-dependent families of systems and
initial/final  states as an \emph{ensemble}.  
Throughout this
paper we assume that $\textbf{P}$
is a nonempty compact subset of $\mathbb{R}^d$ with
$\overline{\operatorname{int}\,\mathbf{P}} =\mathbf{P}$. This implies
that the dimension of $\textbf{P}$ is well defined and satisfies $\dim
\textbf{P}=d$. It excludes the well-understood case of
parallel interconnected linear systems, where
$\textbf{P}$ is a finite set.\\ 

In this paper we address the following open loop control task for
parametric families (\ref{eq:sys-cont}) and (\ref{eq:sys-discrete}). 
Let 
\begin{align*}
 \varphi(T,\theta,u) = \int_0^T \operatorname{e}^{A(\theta)\,(T-s)} B(\theta) u(s) \di{s}
\end{align*}
and
\begin{align*}
 \varphi(T,\theta,u) = \sum_{k=0}^{T-1} A(\theta)^k B(\theta) u_{T-1-k},
\end{align*}
denote the solutions of (\ref{eq:sys-cont}) and (\ref{eq:sys-discrete}), 
respectively.  An ensemble $\Sigma(\theta)$ is called \emph{uniformly
  ensemble controllable}, if  for any $x^* \in C(\mathbf{P},\R^n)$ and
any $\e >0$ there exists a $T>0$ and an input function $u\in L^1([0,T],\mathbb{R}^m)$ (or, in
discrete-time,  a finite input sequence $u_0,...,u_{T-1}$) such that
\begin{align}\label{eq:openloop}
 \sup_{\theta \in \textbf{P}} \| \varphi(T,\theta,u) - x^*(\theta) \| < \e.
\end{align}

They key point to note is that the input function (or input sequence)
is assumed to be independent of the parameter values $\theta$. Thus
the open loop input function $u$ achieves (\ref{eq:openloop})
universally for all $\theta$.  
For the subsequent analysis the following necessary conditions are
important. For the proof we refer to
\cite[Lemma~1]{scl2014}. Let $\sigma(A)\subset \mathbb{C}$ denote the spectrum 
of an $n\times n$-matrix $A$, i.e., the set of real and complex
eigenvalues of $A$. 

\begin{proposition}\label{prop-nec}
Suppose that the ensemble (\ref{eq:sys-cont}) (or
 (\ref{eq:sys-discrete})) is uniform ensemble controllable. Then
\begin{enumerate}
\item[(E1)] For every $\theta\in \textbf{P}$ the linear system $(A(\theta),B(\theta))$ is reachable.
\item[(E2)] For each number $s\geq m+1$ of distinct parameters $\theta_1, \ldots, \theta_s
  \in \textbf{P}$, the spectra of $A(\theta)$ satisfy 
\begin{equation*}
\sigma (A(\theta_1))\cap \cdots \cap \sigma (A(\theta_s))=\emptyset.
\end{equation*}  %
\end{enumerate}
\end{proposition}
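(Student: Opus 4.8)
The plan is to prove both statements by contraposition. In each case, if the stated condition fails, I will exhibit a continuous target $x^*\in C(\mathbf P,\R^n)$ and a constant $\e_0>0$ such that the estimate \eqref{eq:openloop} is violated for \emph{every} admissible control and \emph{every} horizon $T>0$. The common device is to construct a bounded linear functional on $C(\mathbf P,\R^n)$ that annihilates the reachable profile $\theta\mapsto\varphi(T,\theta,u)$ for all $u$ and all $T$ simultaneously, while not annihilating every target. The point that makes the argument work with a single $\e_0$ — and into which the bound $s\ge m+1$ feeds — is that this functional can be chosen independently of the horizon $T$.

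For (E1): suppose $(A(\theta_0),B(\theta_0))$ is not reachable for some $\theta_0\in\mathbf P$. Then the reachability subspace $\mathcal R(\theta_0)$, the column space of the Kalman matrix of $(A(\theta_0),B(\theta_0))$, is a proper subspace of $\R^n$, and, by the Cayley--Hamilton theorem, $\varphi(T,\theta_0,u)\in\mathcal R(\theta_0)$ for every input and every $T$, in the continuous- as well as in the discrete-time case. Taking a unit vector $v\perp\mathcal R(\theta_0)$ and the constant profile $x^*\equiv v$ gives $\|\varphi(T,\theta_0,u)-v\|\ge|v^\top(\varphi(T,\theta_0,u)-v)|=1$, hence $\sup_{\theta\in\mathbf P}\|\varphi(T,\theta,u)-x^*(\theta)\|\ge1$, contradicting uniform ensemble controllability (take $\e<1$).

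For (E2): suppose, for a contradiction, that there are distinct $\theta_1,\dots,\theta_s\in\mathbf P$ with $s\ge m+1$ and a common $\lambda\in\sigma(A(\theta_i))$ for all $i$. Pick left eigenvectors $w_i\neq0$ with $w_i^*A(\theta_i)=\lambda w_i^*$ and set $b_i^*:=w_i^*B(\theta_i)\in\C^{1\times m}$. Using $w_i^*\operatorname{e}^{A(\theta_i)\tau}=\operatorname{e}^{\lambda\tau}w_i^*$ (respectively $w_i^*A(\theta_i)^k=\lambda^k w_i^*$) one obtains
\begin{equation*}
 w_i^*\varphi(T,\theta_i,u)=b_i^*\,U(T,u),\qquad U(T,u):=\int_0^T\operatorname{e}^{\lambda(T-r)}u(r)\,\di{r}
\end{equation*}
in continuous time, and the same identity with $U(T,u):=\sum_{k=0}^{T-1}\lambda^k u_{T-1-k}$ in discrete time; crucially, $U(T,u)\in\C^m$ is the same for every $i$. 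Since $s\ge m+1$, the rows $b_1^*,\dots,b_s^*$ are linearly dependent over $\C$, so $\sum_{i=1}^s\beta_i b_i^*=0$ for some nonzero $\beta=(\beta_1,\dots,\beta_s)\in\C^s$, and therefore $\sum_{i=1}^s\beta_i w_i^*\varphi(T,\theta_i,u)=\bigl(\sum_i\beta_i b_i^*\bigr)U(T,u)=0$ for all admissible $u$ and all $T$. On the other hand, since the $\theta_i$ are distinct and each $w_i\neq0$, the functional $x^*\mapsto\sum_i\beta_i w_i^* x^*(\theta_i)$ is not identically zero on $C(\mathbf P,\R^n)$: using a continuous bump localized near $\theta_1$ one obtains some $x^*$ with $\kappa:=\sum_i\beta_i w_i^* x^*(\theta_i)\neq0$ (here one uses that $\{w_1^*v:v\in\R^n\}$ is a nonzero real-linear subspace of $\C$, even when $\lambda$, and hence $w_1$, is complex). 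Then, for every admissible $u$ and every $T$,
\begin{equation*}
 0<|\kappa|=\Bigl|\sum_{i=1}^s\beta_i w_i^*\bigl(\varphi(T,\theta_i,u)-x^*(\theta_i)\bigr)\Bigr|\le\Bigl(\sum_{i=1}^s|\beta_i|\,\|w_i\|\Bigr)\sup_{\theta\in\mathbf P}\|\varphi(T,\theta,u)-x^*(\theta)\|,
\end{equation*}
so $\sup_{\theta\in\mathbf P}\|\varphi(T,\theta,u)-x^*(\theta)\|\ge\e_0:=\tfrac{|\kappa|}{\sum_{i}|\beta_i|\,\|w_i\|}>0$, again contradicting uniform ensemble controllability.

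I expect the routine parts to be the eigenvector computation leading to the displayed identity and the standard construction of a continuous profile with prescribed values at finitely many points of $\mathbf P$ (a partition-of-unity argument). The only genuinely delicate point — and the conceptual heart of the proof — is that the obstructing data ($v$ in (E1); $\beta$ together with the $w_i$ in (E2)) are independent of the horizon $T$; this is what turns approximate controllability on the infinite-dimensional space $C(\mathbf P,\R^n)$ into a finite-dimensional rank/linear-dependence count, and it is precisely where the threshold $s\ge m+1$ enters. Keeping $x^*$ real-valued while $\lambda$ may be complex is a minor bookkeeping matter, dealt with as indicated.
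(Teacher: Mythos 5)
Your proof is correct, and it is essentially the standard argument: the paper does not reproduce a proof but defers to \cite[Lemma~1]{scl2014}, where (E1) is obtained exactly by projecting onto the orthogonal complement of the reachability subspace and (E2) by the same left-eigenvector/duality device, with the threshold $s\ge m+1$ entering through the linear dependence of $m+1$ row vectors in $\C^{1\times m}$. Both your construction of a $T$-independent annihilating functional and your handling of complex eigenvalues via real-valued targets are sound, so nothing needs to be changed.
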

The preceding two
conditions are useful in order to rule out families that are not
ensemble controllable. In particular,  condition (E2) implies that
$A(\theta)$ cannot have a
$\theta$-independent eigenvalue. We next show that necessary and sufficient
conditions for uniform ensemble controllability can be stated in terms
of a polynomial approximation property. \\
  
For discrete-time ensembles with a scalar input sequence
$u_0,...,u_{T-1}$, the solution at $T>0$ is 
\begin{align*}
 \varphi(T,\theta,u)= \left(u_{T-1}I + u_{T-2}A(\theta)+ \cdots + u_{0}A(\theta)^{T-1} \right)b(\theta).
\end{align*}
This implies that a family of single-input, discrete-time systems $(A(\theta),b(\theta))$ is uniformly ensemble controllable if and only if for all $\e>0$ and all $x^* \in C(\mathbf{P},\R^n)$ there is a real scalar polynomial $p \in \R[z]$ such that 
\[
\sup_{\theta \in \mathbf{P}} \| p(A(\theta))\,b(\theta) - x^*(\theta) \| < \e.
\]
For a multivariable discrete-time system, the ensemble control
condition (\ref{eq:openloop}) is similarly seen as being equivalent 
to 
\[
\sup_{\theta \in \mathbf{P}} \| \sum_{j=1}^{m}p_j(A(\theta))\,b_j(\theta) - x^*(\theta) \| < \e.
\]

More generally, we obtain a polynomial characterization of ensemble
controllability for both continuous-time and discrete-time systems.

\begin{remark}\label{samplemma}
Let $\textbf{P}$ be compact. Assume that:
\begin{enumerate}
\item[(K1)] $A(\theta)$ has simple spectra for all $\theta$.

\item[(K2)] For all $\theta\neq \theta'$ the spectra of $A(\theta))$ and
  $A(\theta')$ are disjoint. 
\end{enumerate}
Then, the connected components of
\[K=\bigcup_{\theta\in\textbf{P}}^{}\sigma (A(\theta)) \; \subset \mathbb{C}\]
are simply connected. Note that the converse is false in general.
\end{remark}
\medskip

\noindent The next result shows that uniform ensemble controllability
for continuous-time and discrete-time systems (\ref{eq:sys-cont}) and
(\ref{eq:sys-discrete}) are equivalent.

\begin{theorem}\label{prop:1}
Let $A\in C(\textbf{P},\R^{n\times n})$ and $B\in
C(\textbf{P},\R^{n\times n})$. Let $b_1(\theta), \ldots,b_m(\theta)$
denote the columns of $B(\theta)$. The following assertions are equivalent. 
\begin{enumerate}
\item[(a)] The continuous-time ensemble (\ref{eq:sys-cont}) is
  uniformly ensemble controllable.

\item[(b)] The discrete-time ensemble (\ref{eq:sys-discrete}) is
  uniformly ensemble controllable.

\item[(c)] For each $\e>0$ and each $x^* \in C(\mathbf{P},\R^n)$
  there exist real scalar polynomials $p_1, \ldots, p_m \in \R[z]$ such that 
\begin{align}\label{eq:con-ensemble}
\sup_{\theta \in \mathbf{P}} \| \sum_{j=1}^{m}p_j(A(\theta))\,b_j(\theta) - x^*(\theta) \| < \e.
\end{align}
\item[(d)] The set 
\[
\operatorname{span}\{A(\theta)^kb_j(\theta)\;|\; k\in \mathbb{N}_0,
j=1, \ldots,m\}
\]
of continuous functions in $\theta$ is dense in $C(\textbf{P},\R^n)$
with respect to the sup-norm.
\end{enumerate}
\end{theorem}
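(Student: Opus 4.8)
The plan is to prove the chain of equivalences $(a)\Leftrightarrow(b)\Leftrightarrow(c)\Leftrightarrow(d)$, with $(c)$ and $(d)$ serving as the central bookkeeping device, since $(d)$ is just a reformulation of $(c)$: the left-hand side of \eqref{eq:con-ensemble} can be made small for every $x^*$ and $\e$ exactly when the span in $(d)$ is sup-norm dense. So the equivalence $(c)\Leftrightarrow(d)$ is essentially immediate from the definition of $p_j(A(\theta))b_j(\theta)=\sum_k (p_j)_k A(\theta)^k b_j(\theta)$ and the fact that span is by definition the set of all finite such linear combinations. The substantive content is therefore $(b)\Leftrightarrow(c)$ and $(a)\Leftrightarrow(b)$ (or $(a)\Leftrightarrow(c)$ directly).

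For $(b)\Leftrightarrow(c)$ I would argue as in the scalar discussion preceding the theorem: for a finite input sequence $u_0,\dots,u_{T-1}\in\R^m$, the discrete-time solution is $\varphi(T,\theta,u)=\sum_{j=1}^m\big(\sum_{k=0}^{T-1}(u_{T-1-k})_j A(\theta)^k\big)b_j(\theta)=\sum_{j=1}^m q_j(A(\theta))b_j(\theta)$ where $q_j(z)=\sum_{k=0}^{T-1}(u_{T-1-k})_j z^k$ is a polynomial of degree $<T$ whose coefficients are precisely the $j$-th components of the inputs. Conversely any tuple $(p_1,\dots,p_m)$ of polynomials is realized this way by reading off coefficients and taking $T>\max_j\deg p_j$. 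Hence condition \eqref{eq:openloop} for the discrete-time ensemble is verbatim condition \eqref{eq:con-ensemble}.

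For the continuous-time equivalence I would show $(c)\Rightarrow(a)$ and $(a)\Rightarrow(c)$ (equivalently route through $(b)$). The direction $(b)\Rightarrow(a)$, i.e. $(c)\Rightarrow(a)$: given polynomials $p_j$ realizing \eqref{eq:con-ensemble} to accuracy $\e$, I would use a sampling/quadrature argument on $[0,T]$ for fixed large $T$ — approximating each monomial action $A(\theta)^k b_j(\theta)$ arising in $p_j(A(\theta))b_j(\theta)$ by an integral $\int_0^T e^{A(\theta)(T-s)}B(\theta)u(s)\,ds$ with a suitably chosen parameter-independent $u\in L^1$; since $\mathbf P$ is compact the matrices $A(\theta)$ are uniformly bounded, so Riemann-sum type approximations converge uniformly in $\theta$, allowing one to pass from the discrete reachable set to an $\e$-dense subset of the continuous reachable set. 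The direction $(a)\Rightarrow(b)$, i.e. $(a)\Rightarrow(c)$: for an $L^1$ control $u$ on $[0,T]$, expand $e^{A(\theta)(T-s)}$ as a uniformly convergent power series, interchange sum and integral (justified by uniform bounds on $\|A(\theta)\|$ on compact $\mathbf P$), and observe that $\varphi(T,\theta,u)=\sum_{j,k}\big(\tfrac1{k!}\int_0^T (T-s)^k u_j(s)\,ds\big)A(\theta)^k b_j(\theta)$ is a (formally infinite, but uniformly convergent) element of the span in $(d)$; truncating the power series at a high order — again uniformly in $\theta$ by compactness — produces genuine polynomials $p_j$ satisfying \eqref{eq:con-ensemble}.

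The main obstacle I expect is making the uniform-in-$\theta$ approximations in the $(a)\Leftrightarrow(b)$ step fully rigorous: one must control the tails of the matrix exponential series and the quadrature errors simultaneously over all $\theta\in\mathbf P$, and verify that the truncation order / sampling mesh can be chosen once for all parameters. Compactness of $\mathbf P$ together with continuity of $A,B$ gives a uniform bound $\|A(\theta)\|\le M$, $\|B(\theta)\|\le M$, and this is exactly what lets the Weierstrass-type tail estimates and Riemann-sum estimates be made uniform; the remaining work is careful but routine. (One should also note the harmless typo that $B\in C(\mathbf P,\R^{n\times n})$ in the hypothesis should read $\R^{n\times m}$, consistent with $b_1,\dots,b_m$ being the columns of $B$.)
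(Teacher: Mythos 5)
Your overall architecture matches the paper's: the equivalences $(c)\Leftrightarrow(d)$ and $(b)\Leftrightarrow(c)$ are handled exactly as in the remarks preceding the theorem (read off the input components as polynomial coefficients), and your direction $(a)\Rightarrow(c)$ --- expanding $e^{A(\theta)(T-s)}$ as a power series, interchanging sum and integral, and truncating uniformly in $\theta$ using $\sup_{\theta\in\mathbf{P}}\|A(\theta)\|<\infty$ --- is correct; it is in effect one half of the result the paper simply imports from Triggiani's Theorem~3.1.1.

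The gap is in $(c)\Rightarrow(a)$. You assert that each monomial $A(\theta)^k b_j(\theta)$ can be realized, uniformly in $\theta$, as $\int_0^T e^{A(\theta)(T-s)}B(\theta)u(s)\,\di{s}$ ``by Riemann-sum type approximations,'' but Riemann sums go the wrong way: they approximate a \emph{given} integral by a finite sum, whereas here you must exhibit an $L^1$ input whose integral approximates a prescribed function of $\theta$, and no candidate $u$ is specified. The standard construction (this is precisely what the paper outsources to the Triggiani citation) runs as follows: (i) with approximate delta functions concentrated at $s=T-\tau$ one shows that $g^j_\tau:=e^{A(\cdot)\tau}b_j(\cdot)$ lies in the closure $\overline{R}$ of the continuous-time reachable set for every $\tau\in[0,T]$, the convergence being uniform in $\theta$ by uniform continuity of $(s,\theta)\mapsto e^{A(\theta)(T-s)}b_j(\theta)$ on the compact set $[0,T]\times\mathbf{P}$; (ii) since $\overline{R}$ is a closed linear subspace of $C(\mathbf{P},\R^n)$ and $\tau\mapsto g^j_\tau$ is a smooth curve in that subspace with $\tfrac{d^k}{d\tau^k}g^j_\tau\big|_{\tau=0}=A(\cdot)^k b_j(\cdot)$, the divided differences of $g^j$ remain in $\overline{R}$ and converge, again uniformly in $\theta$, to $A(\cdot)^k b_j(\cdot)$; hence the span in (d) is contained in $\overline{R}$ and $(d)\Rightarrow(a)$ follows. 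Without this (or an equivalent) argument your $(c)\Rightarrow(a)$ step does not close; the rest of the proposal, including the observation about the typo $B\in C(\mathbf{P},\R^{n\times n})$, is fine.
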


\begin{proof}
The preceding remarks prove the equivalence of (a) and (c). The
equivalence of (c) and (d) is obvious and the
equivalence of (b) and (d) follows from \cite[Theorem~3.1.1]{triggiani1975}.
\end{proof}
\medskip


\section{Refined characterizations of uniform ensemble controllability}

The preceding results show that the problem of uniform ensemble
controllability is intimately connected to the classical problem of
approximating a continuous function by a polynomial. 
In the sequel, unless stated otherwise, all results hold for uniform ensemble controllability of both continuous-time and discrete-time
 systems. 
In this section we will derive more explicit necessary and sufficient
conditions. In particular, we will show that uniform controllability
can only hold for systems depending on at most two parameters.\medskip
 
We begin with a technical result. Given an array of linear
parameter dependent systems 
$(A_{ij},B_{ij})\in C(\textbf{P},\R^{n_i\times n_j}) \times
C(\textbf{P}
,\R^{n_i\times
m_j})$ with $1\leq i\leq j\leq N$. Let $\overline{n}=\sum_{j=1}^{N}n_j$
and $\overline{m}=\sum_{j=1}^{N}m_j$. Define the associated upper triangular
ensemble of systems by
\begin{equation}\label{eq:uppertriang}
 A= 
\begin{pmatrix}
 A_{11}  & \cdots &A_{1N}\\
 & \ddots &\vdots\\
 0& & A_{NN} 
\end{pmatrix}
\in C(\mathbf{P},\R^{\overline{n}\times \overline{n}})
,\quad 
B = 
\begin{pmatrix}
 B_{11}  & \cdots &B_{1N}\\
 & \ddots &\vdots\\
 0& & B_{NN} 
\end{pmatrix}
\in C(\mathbf{P},\R^{\overline{n}\times \overline{m}}).
\end{equation}
 
\begin{proposition}\label{lem:sirect-sum}
The upper triangular family of systems $(A(\theta), B(\theta))$ is uniform ensemble
controllable  if and only if the families $(A_{ii}( \theta),
B_{ii}(\theta))$ are uniform ensemble controllable for $i=1, \ldots,N$.
\end{proposition}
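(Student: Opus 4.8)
The plan is to pass at once to the span characterisation of Theorem~\ref{prop:1}, so that uniform ensemble controllability of $(A,B)$ becomes density of
\[
\mathcal F:=\operatorname{span}\bigl\{A(\theta)^k b_j(\theta)\ :\ k\in\mathbb{N}_0,\ 1\le j\le\overline{m}\bigr\}
\]
in $C(\mathbf{P},\R^{\overline{n}})$, and similarly for each diagonal pair $(A_{ii},B_{ii})$. The structural fact I would use repeatedly is that, $A$ being block upper triangular, each leading coordinate subspace $U_i:=\R^{n_1}\oplus\cdots\oplus\R^{n_i}\oplus 0\oplus\cdots\oplus 0$ is $A(\theta)$-invariant for every $\theta$; the endomorphism induced by $A(\theta)$ on $U_i/U_{i-1}\cong\R^{n_i}$ is precisely $A_{ii}(\theta)$; and a column $b_j$ of $B$ whose input index $j$ lies in block $\ell$ is contained in $U_\ell$, with class in $U_\ell/U_{\ell-1}$ equal to the corresponding column of $B_{\ell\ell}$. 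Hence $A(\theta)^k b_j(\theta)\in U_\ell$ has zero component in state blocks $\ell+1,\dots,N$, while its component in state block $\ell$ equals $A_{\ell\ell}(\theta)^k$ times that column of $B_{\ell\ell}(\theta)$.

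For the implication ``all $(A_{ii},B_{ii})$ uniformly ensemble controllable $\Rightarrow$ $(A,B)$ uniformly ensemble controllable'' I would argue by a cascade, approximating a target $x^*=(x^*_1,\dots,x^*_N)\in C(\mathbf{P},\R^{\overline{n}})$ block by block from the bottom up. Using uniform ensemble controllability of $(A_{NN},B_{NN})$, pick polynomials for the input columns in block $N$ making state block $N$ be $\varepsilon$-close to $x^*_N$; by the observation above these polynomials do not affect any higher state block, and they leave a fixed continuous contribution $d_{N-1}\in C(\mathbf{P},\R^{n_{N-1}})$ in state block $N-1$. Apply uniform ensemble controllability of $(A_{N-1,N-1},B_{N-1,N-1})$ to the target $x^*_{N-1}-d_{N-1}$, obtaining block-$(N-1)$ polynomials that bring state block $N-1$ within $\varepsilon$ of $x^*_{N-1}$ while leaving state block $N$ untouched. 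Iterating down to block $1$ --- the choices never interfere, since block-$i$ inputs act only on state blocks $\le i$ --- and splitting the error budget yields condition~(c) of Theorem~\ref{prop:1}.

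For the converse, ``$(A,B)$ uniformly ensemble controllable $\Rightarrow (A_{ii},B_{ii})$ uniformly ensemble controllable'', I would first reduce to the case $i=1$. The quotient map $q:C(\mathbf{P},\R^{\overline{n}})\to C(\mathbf{P},\R^{\overline{n}}/U_{i-1})$ is continuous and admits a continuous linear section, so it carries the dense set $\mathcal F$ onto the analogous span for the ensemble induced on $\R^{\overline{n}}/U_{i-1}$; that induced ensemble is again block upper triangular, its leading diagonal pair being $(A_{ii},B_{ii})$ (columns from input blocks $<i$ vanish under $q$, those from blocks $\ge i$ restrict to the expected columns), hence uniformly ensemble controllable. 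It therefore remains to prove: a block upper triangular uniformly ensemble controllable ensemble has $(A_{11},B_{11})$ uniformly ensemble controllable.

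This last step is the main obstacle. If $A$ is block diagonal it is immediate --- then $U_1$ carries the $A(\theta)$-invariant complement $0\oplus\R^{n_2}\oplus\cdots\oplus\R^{n_N}$, the associated projection $\pi_1$ intertwines $A(\theta)$ with $A_{11}(\theta)$ and sends each $b_j$ to a column of $B_{11}$ or to $0$, so $\pi_1(\mathcal F)$ is exactly the span for $(A_{11},B_{11})$ and stays dense by continuity and surjectivity of $\pi_1$ on the function spaces. In general, however, the coordinate projection onto state block $1$ does not commute with $A(\theta)$ because of the couplings $A_{1\ell}(\theta)$, $\ell\ge2$, and a priori the block-$\ell$ inputs may leak uncontrollably into state block $1$. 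To remove the coupling I would invoke the necessary conditions of Proposition~\ref{prop-nec}, using the constraints they impose on $\sigma(A(\theta))=\bigcup_k\sigma(A_{kk}(\theta))$ to separate $\sigma(A_{11}(\theta))$ from the remaining $\sigma(A_{\ell\ell}(\theta))$. Given such a separation, one solves the Sylvester equations $A_{11}(\theta)S(\theta)-S(\theta)A_{\ell\ell}(\theta)=-A_{1\ell}(\theta)$ to conjugate $A(\theta)$, by a $\theta$-continuous block-unipotent similarity, into block diagonal form (reducing to the case just treated), and then controls the resulting $\theta$-dependent input matrix through the Mergelyan-type approximation on the separated spectral sets that underlies Theorem~\ref{prop:1} and Remark~\ref{samplemma}. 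Establishing this spectral separation, and making it uniform in $\theta$, is where I expect the bulk of the work to lie.
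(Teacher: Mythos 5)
Your proof of the sufficiency half (all diagonal pairs uniformly ensemble controllable implies the triangular family is) is essentially the paper's argument: the paper runs the same bottom-up cascade, written out for $N=2$ in continuous time, first choosing $u_2$ for the autonomous second block and then absorbing the resulting contribution $z_1(T)$ into the target for the first block; your version via the polynomial characterization of Theorem~\ref{prop:1} is an equivalent rendering. Your quotient reduction of the converse to the case $i=1$ (which for $i=N$ already settles that block) is also correct.

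The gap is the remaining step of the converse, which you rightly identify as the hard part but leave open, and the route you sketch cannot be completed. Solving the Sylvester equations $A_{11}(\theta)S(\theta)-S(\theta)A_{\ell\ell}(\theta)=-A_{1\ell}(\theta)$ requires $\sigma(A_{11}(\theta))\cap\sigma(A_{\ell\ell}(\theta))=\emptyset$ for each \emph{fixed} $\theta$, and this is not a consequence of uniform ensemble controllability: condition (E2) only constrains intersections of spectra taken over $m+1$ \emph{distinct} parameter values. The paper's own Jordan-block ensemble in the proof of Theorem~\ref{thm:li_cdc2013-besser}, namely $\left(\begin{smallmatrix}\theta\lambda&\theta\\0&\theta\lambda\end{smallmatrix}\right)$ with $B=I_2$, is uniformly ensemble controllable and upper triangular with $A_{11}(\theta)=A_{22}(\theta)=\theta\lambda$ and coupling $A_{12}(\theta)=\theta$; there the Sylvester equation reads $0=-\theta$ and has no solution, so the decoupling strategy fails precisely on the instances for which the proposition is invoked. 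For comparison, the paper disposes of this direction with the single sentence ``the necessity part is obvious,'' which covers the bottom block (a genuine quotient, as in your reduction) but does not address the leakage of the higher input blocks into state block $1$ that worries you. Your suspicion is in fact well founded: as literally stated, with no hypothesis on the diagonal blocks of $B$, the ``only if'' direction fails --- for instance $A(\theta)=\left(\begin{smallmatrix}\theta&1\\0&\theta+10\end{smallmatrix}\right)$, $B=\left(\begin{smallmatrix}0&0\\0&1\end{smallmatrix}\right)$ on $\mathbf{P}=[0,1]$ satisfies all hypotheses of Theorem~\ref{MAIN} and hence is uniformly ensemble controllable, while $(A_{11},B_{11})=(\theta,0)$ is not even reachable. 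So the necessity half cannot be closed by any argument without additional structural assumptions on $(A,B)$; your proposal does not repair this, and only its sufficiency half and the bottom-block quotient case stand as written.
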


\begin{proof}
The necessity part is obvious. Thus assume that $(A_{ii},B_{ii})$ are
uniform ensemble controllable. By Theorem~\ref{prop:1} it is sufficient to consider continuous-time systems. For simplicity we focus on $N=2$, i.e., on
\begin{align*}
\dot{x}_1(t)&=A_{11}(\theta)x_1(t)+A_{12}(\theta)x_2(t)+B_{11}(\theta)u_1(t)+B_{12}(\theta)u_2(t)\\
\dot{x}_2(t)&=A_{22}(\theta)x_2(t)+B_{22}(\theta)u_2(t).
\end{align*}
The general case is
treated, proceeding by induction. Let $x_i(t)$ denote
the solution of the $i$-th component.
Given $x^*=\operatorname{col}(x^*_1\,\cdots \, x^*_N) \in
C(\mathbf{P},\R^{\overline{n}})$ and let $\e>0$. By uniform ensemble
controllability of $(A_{22},B_{22})$ there exists an input $u_2\in
L^1([0,T],\R^{m_2})$ such that 
\[
\sup_{\theta \in \textbf{P}} \| x_2(T) - x_2^*(\theta) \| < \e.
\]

Let $u=\operatorname{col}(u_1\,\cdots\,u_N) \in L^1([0,T],\R^{\overline{m}})$ with $u_i
\in L^1([0,T],\R^{m_i})$. 
Applying the variations of constant formula we have
\[
x_2(t)=\int_{0}^{t}e^{(t-s)A_{22}}B_{22}u_2(s)\di{s}
\]
and thus
\begin{align*}
x_1(T)=z_1(T) + \int_{0}^{T}e^{(T-s)A_{11}}B_{11}u_1(s)\di{s},
\end{align*}
where
\[
z_1(T)=\int_{0}^{T}e^{(T-s)A_{11}}
\left(
A_{12}\int_{0}^{s}e^{(s-\tau)A_{22}}B_{22}u_2(\tau)\di{\tau} +B_{12}u_2(s)
\right) \di{s}.
\]
By uniform ensemble controllability of $(A_{11},B_{11})$ there exists
an input  $u_1\in
L^1([0,T],\R^{m_1})$ with
 \[
\sup_{\theta \in \textbf{P}} \| \int_{0}^{T}e^{(T-s)A_{11}}B_{11}u_1(s)\di{s} - x_1^*(\theta)+z_1(T) \| < \e.
\]
But this implies 
\[
\sup_{\theta \in \textbf{P}} \| x_1(T) - x_1^*(\theta) \| < \e
\]
and we are done.
 \end{proof}

Using an appropriate similarity transformation, every system can be
transformed into Hermite canonical form \cite{kailath1980,linnemann1984bestimmung}, which has the upper triangular form
(\ref{eq:uppertriang}).  Given a matrix pair $(A,B) \in
\R^{n\times n} \times \R^{n\times m}$, where $b_i$ is the $i$th column
of $B$. Select from left to right in the permuted Kalman matrix 
\begin{align*}
\begin{pmatrix}
           b_1 & Ab_1& \cdots A^{n-1}b_1&\cdots& b_m & Ab_m \cdots & A^{n-1}b_m
          \end{pmatrix},
\end{align*}
the first linear independent columns. Then one obtains a list of basis
vectors of the reachability subspace as 
\begin{align*}
 b_1,...,A^{K_1-1}b_1,...,b_m,...,A^{K_m-1}b_m.
\end{align*}
The integers $K_1,...,K_m$ are called the \emph{Hermite indices},
where $K_i:=0$ if the column $b_i$ has not been selected. One has
$K_1+\cdots+K_m=n$ if and only if $(A,B)$ is reachable. The next
result has been proven in \cite{scl2014} for the special case that
$\textbf{P}$ is a single compact interval.

\begin{theorem}\label{MAIN}
Let $\textbf{P}\subset \mathbb{R}$ be a finite union of disjoint
compact intervals. The ensemble of linear systems $\Sigma= \{
(A(\theta),B(\theta))\; |\;\theta \in \mathbf{P} \}$ is uniformly ensemble controllable if the following conditions are satisfied: 
\begin{enumerate}
\item[(i)]  $(A(\theta),B(\theta))$ is reachable for all $\theta\in \textbf{P}$.
\item[(ii)]  The input Hermite indices $K_1(\theta),\ldots, K_m(\theta)$ of
  $(A(\theta),B(\theta))$ are independent of $\theta \in \textbf{P}$.
  

  %
\item[(iii)] For any pair of distinct parameters $\theta,\theta '\in \textbf{P}, \theta\neq \theta '$, the spectra of $A(\theta)$ and $A(\theta')$
  are disjoint:
\begin{equation*}
\sigma (A(\theta))\cap \sigma (A(\theta'))=\emptyset.
\end{equation*}
\item[(iv)] For each $\theta\in \textbf{P}$, the eigenvalues of $A(\theta)$
  have algebraic multiplicity one.
\end{enumerate}
\end{theorem}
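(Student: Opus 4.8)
The plan is to reduce the problem, via Theorem~\ref{prop:1} and Proposition~\ref{lem:sirect-sum}, to a purely approximation-theoretic statement about a single-input family in Hermite canonical form, and then invoke Mergelyan's Theorem on the union of spectra. First I would use condition (ii) to bring $(A(\theta),B(\theta))$ into Hermite canonical form by a $\theta$-dependent similarity transformation $S(\theta)$; since the Hermite indices are constant, the transformation and the transformed matrices depend continuously on $\theta$, and controllability of the original ensemble is equivalent to that of the transformed one (a $\theta$-dependent change of basis in the state space does not affect the defining condition (c)/(d) of Theorem~\ref{prop:1}, because $x^*$ ranges over all of $C(\mathbf{P},\R^n)$). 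In Hermite form the pair $(A(\theta),B(\theta))$ is block upper triangular with the $j$-th diagonal block a companion-type matrix of size $K_j$ driven by a single input $b_j(\theta)$; by Proposition~\ref{lem:sirect-sum} it suffices to prove uniform ensemble controllability of each single-input diagonal block $(A_j(\theta),b_j(\theta))$ with $b_j(\theta)=e_1$ (the first standard basis vector) and $A_j(\theta)$ a companion matrix whose characteristic polynomial $\chi_\theta$ has coefficients depending continuously on $\theta$.

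Next I would identify, for the single-input companion block, the controllability condition (c) concretely. With $b(\theta)=e_1$ and $A_j(\theta)$ in companion form of size $k:=K_j$, the cyclic vectors $e_1, A_j(\theta)e_1,\dots,A_j(\theta)^{k-1}e_1$ form a basis of $\R^k$; writing a target $x^*(\theta)\in\R^k$ in this basis gives continuous coordinate functions $c_0(\theta),\dots,c_{k-1}(\theta)$. So $p(A_j(\theta))e_1$ with $p(z)=\sum a_\ell z^\ell$ has coordinates obtained from the $a_\ell$ by reducing $p$ modulo $\chi_\theta$; thus uniform ensemble controllability of the block is equivalent to: the set of all functions $\theta\mapsto p(z)\bmod\chi_\theta$, $p\in\R[z]$, is dense in $C(\mathbf{P},\R^k)$. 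Equivalently — and this is the cleaner reformulation — one shows it suffices to uniformly approximate an \emph{arbitrary} continuous function $f\colon\mathbf{P}\to\R$ together with its first $k-1$ "derivatives along the spectrum" by a polynomial $p$ and its derivatives evaluated at the eigenvalues of $A_j(\theta)$; concretely, because under (iv) each $A_j(\theta)$ has simple eigenvalues, $p(A_j(\theta))e_1$ is determined by the values $p(\lambda_i(\theta))$ at the $k$ eigenvalues $\lambda_1(\theta),\dots,\lambda_k(\theta)$, and by Lagrange interpolation these values are free parameters. Hence one needs: for every tuple of continuous functions prescribing $p$ at the (disjoint, by (iii)) eigenvalue points, there is a single polynomial $p$ doing the job uniformly in $\theta$.

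The decisive step is then a complex-approximation argument on the compact set $K=\bigcup_{\theta\in\mathbf{P}}\sigma(A_j(\theta))\subset\C$. Conditions (iii) and (iv) are exactly hypotheses (K1)–(K2) of Remark~\ref{samplemma}, so the connected components of $K$ are simply connected, hence $\C\setminus K$ is connected and — since $\mathbf{P}$ is a finite union of intervals — $K$ has empty interior, so $K$ has no interior and connected complement. By Mergelyan's Theorem (or already Lavrentiev's Theorem for sets with empty interior and connected complement), every function continuous on $K$ and holomorphic on its interior (vacuous here) is a uniform limit of polynomials on $K$. The target data define, via the simple-spectrum structure and the continuous parametrization $\theta\mapsto(\lambda_i(\theta))$, a well-defined continuous function on $K$: because the map from $K$ back to $\mathbf{P}$ that sends an eigenvalue to its parameter is continuous (the eigenvalues are disjoint across parameters and $\mathbf{P}$ is compact, so this "labeling" is a homeomorphism onto its image on each component), the prescribed interpolation values assemble into a genuine element of $C(K,\C)$. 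Approximating it by a polynomial $p$ and translating back through the Lagrange/coordinate identification yields $\sup_\theta\|p(A_j(\theta))e_1-x^*(\theta)\|<\e$, which is (c) for the block. Taking the blocks together via Proposition~\ref{lem:sirect-sum} and the Hermite transformation finishes the proof.

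I expect the main obstacle to be the topological bookkeeping in passing between the parameter space $\mathbf{P}$ and the spectral set $K$: one must argue carefully that the eigenvalue branches $\theta\mapsto\lambda_i(\theta)$, which need not be globally well-defined as single-valued continuous functions, nevertheless glue together so that "a continuous target on $\mathbf{P}$ pulled to the eigenvalue points" is a continuous function on $K$, using (iii) to guarantee that distinct parameters contribute disjoint points and hence that $K\to\mathbf{P}$ is continuous. A secondary technical point is ensuring the Hermite transformation $S(\theta)$ and the inverse coordinate maps are continuous in $\theta$ under (ii), and that passing from "approximate $x^*$ in the cyclic basis" to "approximate the interpolation values $p(\lambda_i(\theta))$" is uniformly controlled; both follow from compactness of $\mathbf{P}$ and continuity, but they must be stated. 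Everything else is a direct application of the earlier results plus Mergelyan's Theorem.
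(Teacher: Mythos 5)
Your argument is correct in outline, but it takes a genuinely different route from the paper. The paper's own proof is a short two-step reduction: it treats the single-interval case as known (Theorem~1 of \cite{scl2014}, whose proof is exactly the Hermite-form/Mergelyan argument you reconstruct), applies it separately on each interval $\mathbf{P}_i$ to obtain polynomials $p_i$, and then \emph{glues}: since the spectral sets $K_i=\bigcup_{\theta\in\mathbf{P}_i}\sigma(A(\theta))$ are pairwise disjoint compact sets with well-behaved complement, a single polynomial $q$ can simultaneously approximate $p_1$ on $K_1$ and $p_2$ on $K_2$, and $q(A(\theta))b(\theta)$ then works on all of $\mathbf{P}$. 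You instead prove the statement from first principles, running the Hermite decomposition, Proposition~\ref{lem:sirect-sum}, and Mergelyan/Lavrentiev directly on the full spectral set $K=\bigcup_{\theta\in\mathbf{P}}\sigma(A(\theta))$ of the union. What the paper's route buys is brevity and a clear isolation of the one new ingredient (simultaneous polynomial approximation on disjoint compacta); what your route buys is self-containedness, an explicit accounting of where each hypothesis (i)--(iv) enters, and the avoidance of the slightly delicate estimate $\|q(A(\theta))b(\theta)-p_1(A(\theta))b(\theta)\|\lesssim\sup_{z\in K_1}|q(z)-p_1(z)|$, which in the paper's version tacitly requires uniformly bounded eigenvector condition numbers. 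Two technical points you flag deserve to be nailed down if your version is written out: (1) that $K$ has connected complement and empty interior --- this follows because, by (iii) and (iv), each eigenvalue branch is a continuous injection of a compact interval, hence a homeomorphism onto an arc, and a finite union of disjoint arcs is nowhere dense with connected complement; and (2) that Mergelyan produces a complex polynomial, which must be symmetrized via $z\mapsto\tfrac{1}{2}\bigl(p(z)+\overline{p(\bar z)}\bigr)$, using the conjugation symmetry of $K$ and of the interpolation data, to obtain the real polynomial required in condition (c) of Theorem~\ref{prop:1}.
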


\begin{proof}
We show the claim for $\mathbf{P} = \mathbf{P}_1 \cup \mathbf{P}_2$, where $\mathbf{P}_1,\mathbf{P}_2$ are disjoint compact intervals in $\R$. Define $K_i:= \bigcup_{\theta \in \mathbf{P}_i} \sigma(A(\theta))$. The union of $N$ disjoint compact intervals can be concluded by induction. Let $x^*\in C(\mathbf{P},\R^n)$ and $\e>0$ be fixed. According to the proof of Theorem~1 in \cite{scl2014} it is sufficient to prove the assertion for discrete-time single-input ensembles. Then, by  \cite[Theorem~1]{scl2014} the ensembles $\{ (A(\theta),B(\theta))\,|\,\theta \in \mathbf{P}_1 \}$ and $\{ (A(\theta),B(\theta))\,|\,\theta \in \mathbf{P}_2 \}$ are uniformly ensemble controllable. Then, as is shown above there are polynomials $p_1 \in \R[z]$ and $p_2 \in \R[z]$ such that
\begin{align*}
 \sup_{\theta \in \textbf{P}_1} \| p_1(A(\theta))\,b(\theta) - x^*(\theta) \| < \tfrac{\e}{2} \quad \text{ and }  \quad  \sup_{\theta \in \textbf{P}_2} \| p_2(A(\theta))\,b(\theta) - x^*(\theta) \| < \tfrac{\e}{2}.
\end{align*}
Then, by the Stone-Weierstrass Theorem \cite[Theorem~6.6.3]{davis63} there is a polynomial $q \in\R[z]$ satisfying
\begin{align*}
 \sup_{z \in K_1} \| p_1(z) - q(z) \| < \tfrac{\e}{2} \quad \text{ and }  \quad  \sup_{z \in K_2} \| p_2(z) - q(z) \| < \tfrac{\e}{2}.
\end{align*}
Then, for any $\theta \in \mathbf{P}$, w.l.o.g. $\theta \in \mathbf{P}_1$, we have
\begin{align*}
\| q(A(\theta))\,b(\theta) - x^*(\theta) \| \leq \|  q(A(\theta))\,b(\theta)- p_1(A(\theta))\,b(\theta)  \| + \| p_1(A(\theta))\,b(\theta) - x^*(\theta) \| < \e.
\end{align*}
This shows the assertion.
\end{proof}

In the continuous-time case, conditions (i)-(iv)  imply that uniform
ensemble controllability of $\Sigma$ can be achieved in arbitrary time
$T>0$. As pointed out earlier, uniform ensemble controllability is
related to approximation theory. The proof of Theorem~1 in \cite{scl2014} is
based on Mergelyan's Theorem from complex approximation. 

\begin{remark}\label{rem:2}
Condition~(iv) in Theorem~\ref{MAIN} can be replaced by the following two conditions that are sometimes easier to check:
\begin{enumerate}
 \item $A(\theta)$ is diagonalizable by a similarity transformation with uniformly bounded condition number.
 \item $\C \setminus K$ is connected for $K = \bigcup_{\theta \in \mathbf{P}} \sigma(A(\theta))$.
\end{enumerate}
\end{remark}

By Theorem~\ref{prop:1} and Theorem~2.3 in \cite{pamm2014} one obtaines for single-input ensembles the following sufficient condition.

\begin{theorem}\label{OnlyConstantTerm}
Let $\mathbf{P}\subset\mathbb{R}$ be a compact interval and suppose the ensemble $\Sigma=\{(A(\theta),b(\theta))\;|\; \theta \in \textbf{P}\}$ satisfies (i) and (iii) in Theorem~\ref{MAIN}. If there are  $a_1,\dots,a_{n-1}\in\mathbb{R}$ such that for all $\theta \in \mathbf{P}$ the characteristic polynomials are of the form $\chi_{A(\theta)}(z)=z^n-a_{n-1}z^{n-1}-\dots -a_1 z - a_0(\theta)$, then $\Sigma$ is uniformly ensemble controllable.
\end{theorem}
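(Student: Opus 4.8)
The plan is to combine the polynomial reformulation of Theorem~\ref{prop:1} with a reduction to controller canonical form, after which the assertion follows from the complex approximation result \cite[Theorem~2.3]{pamm2014}. By the equivalence (a)$\Leftrightarrow$(d) of Theorem~\ref{prop:1} it suffices to show that
\[
\operatorname{span}\{A(\theta)^{k}b(\theta)\mid k\in\N_{0}\}
\]
is dense in $C(\mathbf{P},\R^{n})$ in the sup-norm.

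First I would record that this density is invariant under $\theta$-dependent similarity. Indeed, if $T\in C(\mathbf{P},\R^{n\times n})$ with $T(\theta)$ invertible for every $\theta$, then, $\mathbf{P}$ being compact and $T,T^{-1}$ continuous hence bounded, the map $f\mapsto T(\cdot)f(\cdot)$ is a linear homeomorphism of $C(\mathbf{P},\R^{n})$, and it carries $A(\theta)^{k}b(\theta)$ to $\bigl(T(\theta)A(\theta)T(\theta)^{-1}\bigr)^{k}T(\theta)b(\theta)$. By reachability~(i) the reachability matrix of $(A(\theta),b(\theta))$ is invertible for all $\theta$, so the standard transformation to controller (Hermite) canonical form is a rational, hence continuous, function of $\theta$ on $\mathbf{P}$; applying it, I may assume $A(\theta)=A_{0}+a_{0}(\theta)N$, where $A_{0}$ is the fixed companion matrix of $z^{n}-a_{n-1}z^{n-1}-\cdots-a_{1}z$, $N$ is the fixed rank-one matrix carrying the constant coefficient, $b(\theta)$ is a fixed standard basis vector $b$, and $a_{0}\in C(\mathbf{P},\R)$.

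Next I would exploit condition~(iii). With $g(z):=z^{n}-a_{n-1}z^{n-1}-\cdots-a_{1}z$ the characteristic polynomial of $A(\theta)$ is $\chi_{\theta}(z)=g(z)-a_{0}(\theta)$, and comparing $\chi_{\theta}$ with $\chi_{\theta'}$ shows that $A(\theta)$ and $A(\theta')$ share an eigenvalue precisely when $a_{0}(\theta)=a_{0}(\theta')$. Hence (iii) is equivalent to injectivity of $a_{0}$; since $\mathbf{P}$ is a compact interval, $a_{0}$ is then strictly monotone and a homeomorphism onto the compact interval $\mathbf{Q}:=a_{0}(\mathbf{P})$. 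Reparametrizing by $q:=a_{0}(\theta)$ turns $\Sigma$ into the canonical one-parameter family $\{(A_{0}+qN,\,b)\mid q\in\mathbf{Q}\}$ and identifies $C(\mathbf{P},\R^{n})$ isometrically with $C(\mathbf{Q},\R^{n})$, so uniform ensemble controllability of $\Sigma$ is equivalent to that of this canonical family. At that point I would invoke \cite[Theorem~2.3]{pamm2014}, which is stated exactly for such single-input companion families over a compact interval with only the constant coefficient varying injectively, and conclude.

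The hard part is the content of \cite[Theorem~2.3]{pamm2014} itself, which is where the approximation theory enters: the spectral set of the canonical family is $K=g^{-1}(\mathbf{Q})$, a finite union of analytic arcs; since $\chi_{q}'=g'$, a Lagrange-type formula recovers $p(A_{0}+qN)b$ from the values of $p$ on the $n$ sheets of $g^{-1}$, so that approximating a prescribed $x^{*}\in C(\mathbf{Q},\R^{n})$ amounts to approximating a continuous function on $K$ by polynomials, for which a Mergelyan/Lavrentiev-type theorem is used once the topology of $\C\setminus K$ has been accounted for. Note that, because condition~(iv) of Theorem~\ref{MAIN} is not assumed here, this step must also cope with multiple eigenvalues, i.e.\ with critical values of $g$ belonging to $\mathbf{Q}$ — that is the genuinely delicate point of the argument, whereas Step~1 (the reduction to canonical form, using compactness of $\mathbf{P}$ for the boundedness of the similarity and its inverse) and the observation that (iii) is exactly injectivity of $a_{0}$ are the new but routine ingredients.
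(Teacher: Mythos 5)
Your proposal follows essentially the same route as the paper, whose entire proof consists of the single remark that the result follows from Theorem~\ref{prop:1} together with Theorem~2.3 of \cite{pamm2014}; your reduction to controller canonical form via a continuous similarity, the observation that condition~(iii) is exactly injectivity of $a_0$, and the reparametrization by $q=a_0(\theta)$ correctly supply the details the paper leaves implicit before invoking that same external theorem. The argument is sound modulo the content of \cite[Theorem~2.3]{pamm2014}, which both you and the authors treat as the black box where the approximation theory lives.
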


\bigskip

As we next show, there do not exist ensembles of linear systems with more than three parameters that are uniformly ensemble controllable. We need the following lemma from \cite{linnemann1984bestimmung}. 

\begin{lemma}\label{lem:her-const}
 Let $\textbf{P}\subset \R^d$ with
$\overline{\operatorname{int}\,\mathbf{P}} =\mathbf{P}$. 
Let
 $\Pi\colon \R^d \to \R^{n^2+nm}$, $ \theta \mapsto \big(
 A(\theta),B(\theta) \big)$ be real analytic. Then the Hermite indices of the ensemble $\Sigma=\{(A(\theta),B(\theta))\;|\; \theta \in \textbf{P}\}$ are generically constant, i.e. the Hermite indices are constant on an open and dense subset of $\textbf{P}$.
\end{lemma}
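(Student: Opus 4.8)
The plan is to establish the genericity of constant Hermite indices by a rank-stratification argument. First I would recall that the Hermite indices $K_1(\theta),\dots,K_m(\theta)$ are determined purely by the pattern of which columns of the permuted Kalman matrix
\[
\mathcal{K}(\theta) = \bigl( b_1(\theta)\ A(\theta)b_1(\theta)\ \cdots\ A(\theta)^{n-1}b_1(\theta)\ \cdots\ b_m(\theta)\ \cdots\ A(\theta)^{n-1}b_m(\theta)\bigr)
\]
are selected by the greedy left-to-right linear-independence procedure. For each index $\ell$ in this list, the decision whether the $\ell$-th column is selected depends only on whether that column lies in the span of the previously selected ones, which in turn is governed by the vanishing or non-vanishing of certain minors of $\mathcal{K}(\theta)$. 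Since $\theta\mapsto(A(\theta),B(\theta))$ is real analytic, each such minor is a real analytic function of $\theta$ on a neighbourhood of $\mathbf{P}$ in $\R^d$.

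Next I would use the key property of real analytic functions: a real analytic function on a connected open set is either identically zero or its zero set has empty interior (indeed is a proper analytic subvariety, hence nowhere dense and of measure zero). The subtlety is that $\mathbf{P}$ need not be connected, but by the assumption $\overline{\operatorname{int}\,\mathbf{P}} = \mathbf{P}$ it has finitely or countably many components each with nonempty interior; I would argue component by component, or more simply pass to $\operatorname{int}\,\mathbf{P}$ and note that $\operatorname{int}\,\mathbf{P}$ is dense in $\mathbf{P}$. On the set where all the ``relevant'' minors that are not identically zero are simultaneously non-vanishing — a finite intersection of complements of nowhere-dense analytic sets, hence open and dense — the greedy selection procedure makes exactly the same choices at every point, so the Hermite indices are locally constant there. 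Concretely, I would proceed iteratively: define $U_0 = \operatorname{int}\,\mathbf{P}$; having chosen the first few basis columns on a dense open set $U_{k}$, the rank of the current selected block is constant (say $r$) on $U_k$, and the next candidate column is independent of the previous selection precisely when one of a finite collection of $(r{+}1)\times(r{+}1)$ minors is nonzero; each such minor is either identically zero on a component of $U_k$ or nonzero on a dense open subset, so removing the zero sets of the not-identically-zero ones yields a dense open $U_{k+1}$ on which the selection decision is unambiguous and constant. After at most $nm$ steps the procedure terminates with a dense open set on which the full list of selected columns, and hence $(K_1,\dots,K_m)$, is constant.

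The main obstacle — and the only place requiring care — is handling the possibly disconnected parameter set and making the ``generic = open and dense'' conclusion precise: one must be sure the finitely many analytic functions appearing in the construction are not identically zero on the relevant pieces, and that the union over the (possibly infinitely many, but for the open-dense conclusion it suffices to work locally) components behaves well. I would resolve this by noting that openness and density are local properties, working on each connected component $V$ of $\operatorname{int}\,\mathbf{P}$ separately: on $V$ the above iterative argument produces an open dense subset of $V$ with constant Hermite indices, and the union of these over all components is open and dense in $\operatorname{int}\,\mathbf{P}$, hence in $\mathbf{P}$ by the standing assumption $\overline{\operatorname{int}\,\mathbf{P}} = \mathbf{P}$. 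A secondary point is that the indices may differ between components — but the statement only claims the indices are constant on \emph{an} open and dense subset, not that there is a single global value; so it suffices to observe they are locally constant on the constructed set, which is automatic since that set is built as a finite intersection of conditions ``minor $\neq 0$'' / ``minor $\equiv 0$'' that are themselves locally constant in outcome.
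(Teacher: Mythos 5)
Your route is genuinely different from the paper's. The paper partitions the system space $\R^{n^2+nm}$ into the finitely many constructible algebraic sets $\operatorname{Her}(K)$, one for each admissible index tuple $K$, pulls these back under the real analytic map $\Pi$ to a finite partition of $\R^d$ into constructible analytic sets, uses a Baire-type argument to find one piece $\Pi^{-1}\big(\operatorname{Her}(K_*)\big)$ with nonempty interior (hence containing an open and dense subset of $\R^d$), and finally uses disjointness of the pieces to show that $K_*$ is unique. You replace this abstract stratification by a concrete induction on the greedy column-selection procedure, controlling each selection decision by the vanishing or non-vanishing of finitely many analytic minors. This is more elementary and makes the mechanism visible, at the cost of some bookkeeping; the induction itself is sound, since on each $U_k$ the previously selected columns are fixed, so the next decision is governed by a fixed finite family of $(r+1)\times(r+1)$ minors, each real analytic, and removing the nowhere dense zero sets of those not identically zero preserves open density.

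There is, however, one genuine gap, exactly at the point you dismiss as ``secondary''. You conclude only that the Hermite indices are \emph{locally} constant on the constructed set, arguing that the lemma does not require a single global value. That reading is not the intended one: ``constant on a set'' means one value on that set, the paper's own proof devotes its final step to proving uniqueness of $K_*$, and the lemma is invoked in Theorem~\ref{thm:negative} precisely to reduce to hypothesis (ii) of Theorem~\ref{MAIN}, i.e.\ Hermite indices independent of $\theta$ throughout the (shrunken) parameter set. Fortunately your own setup already contains the fix: $\Pi$ is real analytic on all of $\R^d$, which is connected, so by the identity theorem each minor is either identically zero on $\R^d$ or has a nowhere dense zero set in $\R^d$ --- the dichotomy is global, not per component of $\operatorname{int}\,\mathbf{P}$. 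Running your induction on $\R^d$ (or on any connected open neighbourhood of $\mathbf{P}$) instead of component by component yields a single selection pattern, hence a single index tuple, on a dense open subset of $\R^d$, whose intersection with $\mathbf{P}$ is open and dense in $\mathbf{P}$ because $\overline{\operatorname{int}\,\mathbf{P}}=\mathbf{P}$.
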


\begin{proof}
 Let $K_{n,m}$ denote the set of all $K=(K_1,...,K_m) \in \N_0^m$ with  $K_1+\cdots +K_m \leq n$. For $K \in K_{n,m}$ let $\operatorname{Her} (K) := \{ (A,B)\in \mathbb{R}^{n \times n}\times
  \mathbb{R}^{n\times m} \,| \,\text{ Hermite indices are } K\}$ denote the subset in $\mathbb{R}^{n^2+nm}$ of all systems with Hermite
indices $K$. The set $  \operatorname{Her} (K)$ is a constructible
algebraic subset of $\R^{n^2+nm}$, which induces a disjoint partition
$\bigcup_{K \in K_{n,m}}   \operatorname{Her} (K) =
\R^{n^2+nm}$. Consider the real analytic map $\Pi\colon \R^d  \to
\R^{n^2+nm}$, $ \theta \mapsto \big( A(\theta),B(\theta) \big)$. Then,
the preimage $\Pi^{-1} \big( \operatorname{Her} (K) \big)$ is a constructible analytic subset of $\R^d $ (or the empty
set) and there exists a $K_* \in K_{n,m}$ such
that $\operatorname{dim} \Pi^{-1} \big(\operatorname{Her} (K_*) \big)$
has an interior point. But any constructible analytic subset $S\subset
\mathbb{R}^d$ of dimension $d$ contains an open and dense subset of $\R^d$. Thus the set of interior points of
$\Pi^{-1} \big(\operatorname{Her} (K_*)\big)$ is an open and dense subset of $\R^d$. In
particular, the intersection $\Pi^{-1} \big(\operatorname{Her} (K_*) \big)\cap
\textbf{P}$ contains an open and dense subset 
of $\textbf{P}$. It remains to show that $K_*$  is unique. Suppose there exist two
different $K_1,K_2 \in K_{n,m}$ such that their preimage
$\Pi^{-1}\big( \operatorname{Her} (K_i) \big)$, $i=1,2$ contains open
and dense subsets. Then the intersection $\Pi^{-1}\big(
\operatorname{Her} (K_1) \big) \cap \Pi^{-1}\big( \operatorname{Her}
(K_2) \big)\not= \emptyset$, implying $K_1=K_2$. This shows the assertion.
\end{proof}

\begin{theorem}\label{thm:negative}
Let $\textbf{P}\subset \R^d$ with $\overline{\operatorname{int}\,\mathbf{P}} =\mathbf{P}$. Let $\Pi\colon \R^d \to \R^{n^2+nm}$, $ \theta \mapsto \big(
 A(\theta),B(\theta) \big)$ be real analytic. Suppose the ensemble $\Sigma(\theta)=\{ \big(A(\theta),B(\theta)\big) \,|\, \theta \in \mathbf{P}\}$ is uniformly ensemble controllable.
\begin{enumerate}
 \item[(a)] Then $\operatorname{dim}\,\mathbf{P} \leq 2$.
 \item[(b)] Let $\lambda_1(\theta),...,\lambda_n(\theta)$ denote the
 eigenvalues of $A(\theta)$. Assume that at least one branch  $\{
 \lambda_k(\theta) \,|\, \theta \in \mathbf{P}\}$ of the eigenvalues
 is contained in a one-dimensional real subspace $S$ of $\C$. Then $\operatorname{dim}\,\mathbf{P} =1$.
 \item[(c)] If there is at least one eigenvalue such that $S:=\{ \lambda_k(\theta) \,|\, \theta \in \mathbf{P}\} \subset \R$ then $\operatorname{dim}\,\mathbf{P} =1$.
\end{enumerate}
\end{theorem}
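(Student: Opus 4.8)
The plan is to reduce all three claims to the following elementary observation: if $f\colon Q\to\C$ is a continuous map from a compact $d$-dimensional ball $Q$ all of whose fibres are finite, then $d=\dim Q\leq\dim\C=2$, and if in addition the image $f(Q)$ is contained in a one-dimensional real subspace of $\C$ (a copy of $\R$) then $d\leq 1$. Uniform ensemble controllability enters only to manufacture such an $f$ — an eigenvalue branch of $A(\theta)$ with finite fibres — where the finiteness of the fibres is precisely the content of the necessary condition (E2) in Proposition~\ref{prop-nec}, which forbids more than $m$ distinct parameters from sharing an eigenvalue.

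First I would pass to a single-input subsystem on a small ball. By Lemma~\ref{lem:her-const} the Hermite indices are constant on a dense open set $\mathbf{P}_0\subseteq\mathbf{P}$; choose a compact Euclidean ball $Q\subseteq\operatorname{int}\mathbf{P}\cap\mathbf{P}_0$. On $Q$ the columns of the reachability matrix selected by the (constant) Hermite indices assemble into an $n\times n$ matrix that is real-analytic and invertible, so there is a real-analytic similarity carrying $(A(\theta),B(\theta))$ into Hermite canonical form — block upper triangular as in (\ref{eq:uppertriang}) with single-input diagonal blocks $(A_{ii}(\theta),b_i(\theta))$. Uniform ensemble controllability on $\mathbf{P}$ restricts to $Q$ (any $x^*\in C(Q,\R^n)$ extends to $C(\mathbf{P},\R^n)$ by Tietze), and it is invariant under a continuous pointwise-invertible similarity; hence Proposition~\ref{lem:sirect-sum} makes each block $(A_{ii},b_i)$ uniformly ensemble controllable on $Q$. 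Fixing $i$ with $A_{ii}$ non-trivial and applying condition (E2) of Proposition~\ref{prop-nec} to this single-input ($m=1$) block yields $\sigma(A_{ii}(\theta))\cap\sigma(A_{ii}(\theta'))=\emptyset$ for all distinct $\theta,\theta'\in Q$.

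Next I would extract an analytic eigenvalue branch. The characteristic polynomial $\chi(\theta,z)=\det(zI-A_{ii}(\theta))$ has real-analytic coefficients; its squarefree part $\chi/\gcd_z(\chi,\partial_z\chi)$ has real-analytic coefficients away from a proper analytic subset of $Q$ and there has only simple roots, so by the implicit function theorem one obtains a real-analytic eigenvalue branch $\lambda$ of $A_{ii}$ on some compact ball $Q'\subseteq Q$. By the disjointness of the spectra $\lambda$ is injective on $Q'$, hence a homeomorphism onto $\lambda(Q')\subseteq\C$; since covering dimension is a topological invariant, bounded by $2$ on subsets of $\C$, this gives $\dim\mathbf{P}=\dim Q'\leq 2$, proving (a). For (b) and (c) the hypothesis already supplies an eigenvalue branch $\lambda_k$ of $A(\theta)$ with $\lambda_k(\mathbf{P})\subseteq S$ for a one-dimensional real subspace $S\subset\C$; running the same argument (restrict to a ball on which $\lambda_k$ is continuous, realise it after the above reduction as an injective branch of a single-input block, or directly as an at most $m$-to-one map) produces a homeomorphism of a $d$-ball onto a subset of $S\cong\R$, so $d\leq 1$. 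Since $\overline{\operatorname{int}\mathbf{P}}=\mathbf{P}$ is non-empty and $\mathbf{P}$ is not a finite set, $d\geq 1$, hence $\dim\mathbf{P}=1$; part (c) is the special case $S=\R$.

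The hard part is the extraction of the analytic eigenvalue branch: for a real-analytic family depending on several parameters the eigenvalues are in general not globally — and not even locally, at non-generic points — analytic, so one must argue, as above via the squarefree part of the characteristic polynomial together with the normal form of Lemma~\ref{lem:her-const}, that a genuine analytic (or at least continuous) branch exists on some ball. One can also avoid the Hermite reduction altogether and work directly with an eigenvalue branch of $A(\theta)$: then (E2) only forces the branch to be at most $m$-to-one, and one replaces the homeomorphism step by Hurewicz's theorem on closed maps with zero-dimensional fibres, $\dim Q'\leq\dim\lambda(Q')$, to obtain the same conclusions. Everything else — Tietze extension, similarity invariance of ensemble controllability, and the cited facts of dimension theory — is routine.
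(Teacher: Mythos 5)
Your proof follows essentially the same route as the paper's: reduce via Lemma~\ref{lem:her-const} and Proposition~\ref{lem:sirect-sum} to single-input Hermite blocks, invoke (E2) to obtain injectivity of an eigenvalue branch, and conclude by topological invariance of dimension. You are in fact more careful than the paper on the one delicate point---the existence of a continuous (indeed analytic) eigenvalue branch on some ball, which the paper asserts with only a one-line appeal to continuous dependence of eigenvalues---and your squarefree-part and Hurewicz (finite-fibre) alternatives are refinements of, not departures from, the published argument.
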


\begin{proof}
As the Hermite indices are generically constant, i.e. independent of the parameter, we may assume that $(A(\theta),B(\theta))$ satisfies the assumptions (i) and (ii) of Theorem~\ref{MAIN}. As the Hermite indices of the family $(A(\theta),B(\theta))$ are independent of parameter $\theta \in \mathbf{P}$, there  exists a continuous family of invertible coordinate transformations $S(\theta)$ such that $(S(\theta)A(\theta)S(\theta)^{-1},S(\theta)B(\theta))$ is in Hermite canonical form. Thus, w.l.o.g. we can assume that $(A(\theta),B(\theta))$ is in Hermite canonical form 
\begin{equation*}\label{eq:Herm}
\begin{pmatrix}
A_{11}(\theta) & \cdots & A_{1m}(\theta)\\
 & \ddots&  \vdots\\
0 & & A_{mm}(\theta)
\end{pmatrix},
\quad
\begin{pmatrix}
b_{1} &  & 0\\
 & \ddots &\\
0 & & b_{m}
 \end{pmatrix}, 
\end{equation*}
where the $m$ single-input subsystems $(A_{kk}(\theta),b_k)\in
\mathbb{R}^{n_k\times n_k} \times \mathbb{R}^{n_k}$ are reachable and
in control canonical form. Note that $b_k$ denotes the first standard
basis vector and thus is independent of $\theta$. Partition the
desired state vector as $x^*(\theta)=\operatorname{col}(x^*_1(\theta)\,\cdots \, x^*_N(\theta))$ with
$x^{*}_{k}(\theta)\in \R^{n_k}$. By Proposition~\ref{lem:sirect-sum} the single-input systems defined by $(A_{kk}(\theta),b_k)$ are uniform ensemble controllable. Applying (E2) for the single input case we conclude that the spectra of $A_{kk}(\theta)$ and $A_{kk}(\theta')$ are disjoint for all distinct parameters $\theta \neq \theta'$ in $\mathbf{P}$. Let $\lambda\colon \mathbf{P} \to \C$ be a locally defined branch of the eigenvalues of $A(\theta)$. Let $S$ denote be real subspace of $\C$ of smallest dimension such that $\lambda(\mathbf{P}) \subset S$. Since the eigenvalues of a matrix depends continuously on the parameters we see that locally $\lambda$ is continuous. By injectivity of $\lambda$ we get $\operatorname{dim}_\R\,\mathbf{P} \leq \operatorname{dim}_\R\,S$. This completes the proof.
\end{proof}

\section{Special classes of ensembles}

Consider an ensemble of harmonic oscillators defined by
\begin{equation*}
 \theta \, A= \theta\, 
\begin{pmatrix}
 0 & -1\\
  1& 0
\end{pmatrix}
,\quad 
B=
\begin{pmatrix}
1& 0\\
0&1
\end{pmatrix},
\end{equation*}
where the parameter is contained in the union $\mathbf{P}$ of finitely many compact intervals with $0 \in \mathbf{P}$.
Note that for $\theta=0$ the matrix $A(0)$ has a double eigenvalue $0$. The Hermite indices are
\begin{equation*}
 \begin{split}
 K_1(\theta)=
 \begin{cases} 
  2 & \text{ if } \theta \neq 0\\
  1 &  \text{ if } \theta = 0
\end{cases}
\quad \text{ and } \quad
 K_2(\theta)=
 \begin{cases} 
  0 & \text{ if } \theta \neq 0\\
  1 &  \text{ if } \theta = 0,
\end{cases}
\end{split}
\end{equation*}
i.e. they are not constant. Thus the ensemble does not satisfy the condition~(ii) in Theorem~\ref{MAIN}. Nevertheless we will show that this family of systems is uniformly ensemble controllable. The next result applies Theorem~\ref{MAIN} to prove an extension of \cite[Theorem~1]{li-cdc-2013}.

\medskip

For fixed matrices $A \in \R^{n \times n}$ and $B\in \R^ {n\times m}$ we consider ensembles of the form
\begin{equation*}\label{eq:sys-li-cont}
\Sigma = \{ (\theta A, B) \, | \, \theta \in \mathbf{P} \}.
\end{equation*}
For this special class of system families the characterization
of uniform ensemble controllability depends on whether the parameter set contains zero or not.

\begin{theorem}\label{thm:li_cdc2013-besser}
Let $\mathbf{P} \subset \R$ be the union of compact intervals. 
Assume that $0 \in \mathbf{P}$. The family $\Sigma=\{ (\theta A,B)\;|\;
\theta \in \mathbf{P}\}$ is uniformly ensemble controllable if and only if $\operatorname{rank}A=n$ and  $\operatorname{rank}B=n$.
\end{theorem}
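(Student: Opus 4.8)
The plan is to prove both directions. For necessity, suppose $\Sigma$ is uniformly ensemble controllable. Since $0 \in \mathbf{P}$, the subsystem at $\theta = 0$ is $(0, B)$, and condition (E1) of Proposition~\ref{prop-nec} forces $(0,B)$ to be reachable; the Kalman matrix of $(0,B)$ is just $\begin{pmatrix} B & 0 & \cdots & 0\end{pmatrix}$, so reachability is equivalent to $\operatorname{rank} B = n$. For the rank of $A$, I would argue that if $A$ were singular, then $0 \in \sigma(\theta A)$ for \emph{every} $\theta \in \mathbf{P}$, so $0$ would be a $\theta$-independent eigenvalue; this contradicts condition (E2), which says the spectra over any $m+1$ distinct parameters have empty common intersection. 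Hence $\operatorname{rank} A = n$ is necessary. (One should check the edge case where $\mathbf{P}$ is too small to contain $m+1$ points, but since $\overline{\operatorname{int}\,\mathbf{P}} = \mathbf{P}$ and $\mathbf{P}$ is a union of compact intervals, it is infinite, so (E2) applies.)

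For sufficiency, assume $\operatorname{rank} A = \operatorname{rank} B = n$. The main issue, as the harmonic-oscillator example preceding the theorem makes clear, is that the Hermite indices need not be constant across $\theta = 0$ (at $\theta = 0$ the matrix $\theta A$ is zero and its reachability structure degenerates), so Theorem~\ref{MAIN} cannot be applied directly on all of $\mathbf{P}$. The idea is to split $\mathbf{P}$ into the part near $0$ and the part away from $0$, and to handle the degenerate point $\theta=0$ separately using the fact that $B$ is square and invertible there. Concretely, write $\mathbf{P} = \mathbf{P}_0 \cup \mathbf{P}_+$ where $\mathbf{P}_+ = \mathbf{P} \cap \{|\theta| \geq \delta\}$ for small $\delta > 0$ and $\mathbf{P}_0 = \mathbf{P} \cap \{|\theta| \leq \delta\}$. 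On $\mathbf{P}_+$, since $\theta \neq 0$ and $A$ is invertible, $(\theta A, B)$ has the same reachability indices as $(A,B)$ independent of $\theta$, so conditions (i) and (ii) of Theorem~\ref{MAIN} hold there; conditions (iii) and (iv) hold because $\sigma(\theta A) = \theta\,\sigma(A)$, so distinct nonzero parameters give disjoint spectra (scaling a fixed finite set by distinct factors), and because — after passing to the generalized eigenspace decomposition of $A$ — the algebraic multiplicities can be arranged (or, if $A$ has a genuine Jordan block, one invokes Remark~\ref{rem:2}: $\theta A$ is diagonalized by the same transformation as $A$ with $\theta$-independent condition number, and $\bigcup_{\theta \in \mathbf{P}_+}\sigma(\theta A)$ lies in finitely many rays through the origin, whose complement is connected). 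Thus $\Sigma$ restricted to $\mathbf{P}_+$ is uniformly ensemble controllable by Theorem~\ref{MAIN}.

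The remaining step is to extend the approximating polynomial across $\mathbf{P}_0$, which is where the bulk of the work lies. Here I would use that at $\theta = 0$ one can hit \emph{any} target exactly: $p(\theta A)\,b_j(\theta)$ with $p(z) = c_0 + c_1 z + \cdots$ evaluated at $\theta = 0$ gives $c_0\, b_j(0) = c_0\, b_j$, and since $B$ is invertible the constant terms $c_0^{(1)},\dots,c_0^{(m)}$ of the $m$ polynomials can be chosen so that $\sum_j c_0^{(j)} b_j = x^*(0)$ exactly. By continuity this handles a whole neighborhood of $0$ up to error $\e$, provided the higher-order coefficients are controlled. The clean way to package this: choose $\delta$ small enough (using continuity of $x^*$, $A$, $B$) that on $\mathbf{P}_0$ the target is within $\e/3$ of the constant $x^*(0)$ and the "correction" terms are small; obtain polynomials $p_j^+$ working on $\mathbf{P}_+$ from Theorem~\ref{MAIN}; obtain polynomials $p_j^0$ (essentially constants $c_0^{(j)}$) working on $\mathbf{P}_0$; then use the Stone--Weierstrass argument exactly as in the proof of Theorem~\ref{MAIN} — the spectral sets $K_+ = \bigcup_{\mathbf{P}_+}\sigma(\theta A)$ and $K_0 = \bigcup_{\mathbf{P}_0}\sigma(\theta A)$ (a small disc around $0$) are disjoint compact sets for $\delta$ small, each with connected complement, so there is a single polynomial $q_j$ uniformly close to $p_j^+$ on $K_+$ and to $p_j^0$ on $K_0$, and the triangle-inequality estimate finishes as before. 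The main obstacle I anticipate is making the interplay between the small parameter $\delta$, the choice of the constant terms, and the Stone--Weierstrass transfer precise — in particular ensuring $K_0$ and $K_+$ are genuinely disjoint and that shrinking $\delta$ does not blow up the polynomials produced by Theorem~\ref{MAIN} on $\mathbf{P}_+$ (it does not, since $\mathbf{P}_+$ shrinks as $\delta$ shrinks, but one must phrase the argument so the $\mathbf{P}_+$-step is performed \emph{after} $\delta$ is fixed).
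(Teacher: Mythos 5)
Your necessity argument coincides with the paper's and is fine. The sufficiency half, however, contains a genuine gap, and it is not the one you flag at the end (the bookkeeping of $\delta$) but the premise on which both the $\mathbf{P}_+$ step and the gluing step rest: you assert that distinct nonzero parameters give disjoint spectra because $\sigma(\theta A)=\theta\,\sigma(A)$ is a fixed finite set scaled by distinct factors. This is false whenever $A$ has two eigenvalues whose ratio equals a ratio of admissible parameters: for $A=\operatorname{diag}(1,2)$ and $\mathbf{P}=[0,2]$ one has $\sigma(1\cdot A)\cap\sigma(2\cdot A)=\{2\}$. Hence condition~(iii) of Theorem~\ref{MAIN} can fail on $\mathbf{P}_+$, and for the same reason the spectral sets $K_0$ and $K_+$ need not be disjoint (in the example, $K_0=[0,2\delta]$ and $K_+=[\delta,4]$ overlap), which invalidates the Mergelyan/Stone--Weierstrass transfer: that argument prescribes two different polynomials on two compact sets and needs them disjoint. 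It is precisely to secure this disjointness that the paper must add the hypothesis $\lambda_k\mathbf{P}\cap\lambda_l\mathbf{P}=\emptyset$ in Theorem~\ref{thm:li_cdc2013-besser-2}(b), where $B$ is not assumed to have full row rank; it cannot be had for free here. (A secondary issue: if $A$ has a nontrivial Jordan block, then $\theta A$ is not diagonalizable for $\theta\neq 0$, so neither (iv) nor the first condition of Remark~\ref{rem:2} is available as you suggest.)

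The paper avoids all of this by exploiting $\operatorname{rank}B=n$ much more aggressively: reduce to $B=I_n$, pass to the Jordan form of $A$, and use Proposition~\ref{lem:sirect-sum} to decouple the ensemble into its Jordan blocks, each carrying a \emph{single} eigenvalue $\lambda\neq 0$. Inside one block the diagonal subsystems are the scalar ensembles $\tfrac{\partial}{\partial t}z=\theta\lambda z+u$, to which Theorem~\ref{MAIN} applies on all of $\mathbf{P}$ \emph{including} $\theta=0$ (the singleton spectra $\{\theta\lambda\}$ really are pairwise disjoint now, and reachability and constancy of the Hermite index are trivial for a scalar system with $b=1$); the nilpotent off-diagonal coupling is then absorbed by the cascade argument of Proposition~\ref{lem:sirect-sum}. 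In particular, no splitting of $\mathbf{P}$ at $\theta=0$ is needed: the degeneracy of the Hermite indices of the full pair $(\theta A,B)$ at $\theta=0$, which you correctly identify, is cured by the block decoupling afforded by a full-row-rank $B$, not by a decomposition of the parameter set.
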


\begin{proof}
We focus on the continuous-time case; the discrete-time case goes mutatis mutandis. 
Suppose that $\Sigma=\{ (\theta A,B)\;|\; \theta \in \mathbf{P}\}$ is uniformly ensemble controllable. Then, since $0 \in \mathbf{P}$, the necessary condition (E1) implies $\operatorname{rank}B=n$. In particular, we have $m\geq n$. To show the second claim, suppose that  $\operatorname{rank}A<n$. Then zero is an eigenvalue of $A$ and for distinct parameter values $\{\theta_1,...,\theta_{n+1} \} \in \mathbf{P}$ we have $$0 \in \sigma\big(\theta_1A\big) \cap \cdots \cap \sigma\big(\theta_{n+1}A\big)$$ contradicting the necessary condition (E2).

Conversely, assume that  $\operatorname{rank}A=n$ and  $\operatorname{rank}B=n$. Without loss of generality we can assume that
$B=I_n$. The reachability condition (E1) is implied by the
rank condition on $B$. Let $\Lambda$ denote the Jordan canonical form. It is sufficient to consider the ensemble
\begin{equation}\label{eq:sys-cont-theta-lambda}
 \begin{split}
  \tfrac{\partial}{\partial t}x(t,\theta)= \theta \Lambda x(t,\theta)+ I\,u(t).
 \end{split}
\end{equation}
Using Proposition~\ref{lem:sirect-sum} it remains to prove the assertion for one Jordan block. For simplicity we focus on the case of a two dimensional Jordan block; the higher dimensional case follows by an induction argument. Let
\begin{equation}\label{eq:sys-cont-theta-lambda-2}
  \tfrac{\partial}{\partial t}z(t,\theta)= 
  \begin{pmatrix}
 \theta \lambda & \theta\\
  0& \theta \lambda
\end{pmatrix} z(t,\theta)+ 
  \begin{pmatrix}
  1 & 0\\
  0& 1
\end{pmatrix}
u(t).
\end{equation}
The solution to~\eqref{eq:sys-cont-theta-lambda-2} is given by 
\begin{equation*}
 \varphi(T,\theta,u)= 
\int_0^T 
\begin{pmatrix}
\operatorname{e}^{\theta \lambda \, (T-s)}  u_1(s)  + \theta \, (T-s)\operatorname{e}^{\theta \lambda \, (T-s)}  u_2(s)\\
\operatorname{e}^{\theta \lambda \, (T-s)}  u_2(s) 
\end{pmatrix} \di{s}.
\end{equation*}
Given $z^*=\operatorname{col}( z^*_1\, \, z^*_2) \in C(\mathbf{P},\R^2)$ and $\e>0$. By applying Theorem~\ref{MAIN} to the ensemble
\begin{equation*}
  \tfrac{\partial}{\partial t}z_2(t,\theta)= 
  \theta \lambda
\, z_2(t,\theta)+ 
u(t),
\end{equation*}
there is an input function $u_2 \colon [0,T]\to \R$ so that $| z_2^*(\theta) -  \varphi_2(T,\theta,u_2)| < \e $ for all $\theta \in \mathbf{P}$. Let
\begin{align*}
 w^*(\theta):= z_1^*(\theta) 
 - \int_0^T 
 \theta \, (T-s)\operatorname{e}^{\theta \lambda \, (T-s)}  u_2(s) \di{s} \in C(\mathbf{P},\R).
\end{align*}
Following the same reasoning there is an input $u_1\colon [0,T] \to \R$ so that
\begin{align*}
| w^*(\theta) 
 - \int_0^T 
\operatorname{e}^{\theta \lambda \, (T-s)}  u_1(s) \di{s}| < \e.
\end{align*}
Consequently, we have
\begin{equation*}
\sup_{\theta \in \mathbf{P}} \| z^*(\theta) - \varphi(T,\theta,u)\| < \e
\end{equation*}
and we are done.
\end{proof}

Theorem~\ref{thm:li_cdc2013-besser} dealt with the situation $m\geq n$. In the subsequent result we do not make this assumption. We use the notation $\lambda \mathbf{P} :=\{ \lambda\,\theta \,|\, \theta \in \mathbf{P}\}$.
\begin{theorem}\label{thm:li_cdc2013-besser-2}
Let $\mathbf{P}$ be the union of compact real intervals with $0 \not \in \mathbf{P}$. 
\begin{enumerate}
\item[(a)]  If the family $\Sigma=\{ (\theta A,B) \,|\,\theta \in \mathbf{P}\}$ is uniformly ensemble controllable then  $(A,B)$ is controllable and $A$ is invertible.


\item[(b)] Let $(A,B)$ be controllable and let $A$ be invertible and diagonalizable such that $\lambda_k \mathbf{P} \cap \lambda_l  \mathbf{P} = \emptyset$ for all $k\neq l \in \{1,...,r\}$. Then the family $\Sigma=\{ (\theta A,B) \,|\,\theta \in \mathbf{P}\}$ is uniformly ensemble controllable.
 \end{enumerate}
\end{theorem}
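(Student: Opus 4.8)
The plan is to treat parts (a) and (b) separately, mirroring the structure of the proof of Theorem~\ref{thm:li_cdc2013-besser}.

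For part (a), I would argue by contraposition using the necessary conditions in Proposition~\ref{prop-nec}. Since $0 \notin \mathbf{P}$, every parameter value $\theta$ is a nonzero real number, so $\theta A$ and $A$ have the same rank and $(\theta A, B)$ is reachable if and only if $(A,B)$ is reachable (scaling $A$ by a nonzero constant does not change the span of the Kalman matrix columns, only rescales them). Hence (E1) applied at any single $\theta \in \mathbf{P}$ forces $(A,B)$ controllable. For invertibility of $A$: if $A$ were singular, then $0 \in \sigma(A) = \sigma(\theta A)$ for \emph{every} $\theta$, so choosing $n+1$ distinct parameters in $\mathbf{P}$ (possible since $\mathbf{P}$ has nonempty interior) we get $0$ in the intersection of all their spectra, violating (E2). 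This settles (a).

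For part (b), the strategy is to diagonalize and reduce to scalar ensembles. Since $A$ is diagonalizable and invertible, write $A = V \Lambda V^{-1}$ with $\Lambda = \operatorname{diag}(\lambda_1 I_{d_1}, \ldots, \lambda_r I_{d_r})$, the $\lambda_k$ distinct and nonzero. The coordinate change $x \mapsto V^{-1}x$ turns the ensemble into $\{(\theta \Lambda, V^{-1}B) \,|\, \theta \in \mathbf{P}\}$, and controllability is preserved. Because $\theta \Lambda$ is block-diagonal (not merely block-triangular), I would like to invoke Proposition~\ref{lem:sirect-sum} to reduce to each eigenvalue block $(\theta \lambda_k I_{d_k}, \, C_k)$ where $C_k$ is the corresponding row block of $V^{-1}B$; controllability of $(A,B)$ together with the eigenvalue structure guarantees (via the Hautus test) that each $(\lambda_k I_{d_k}, C_k)$, equivalently $(\theta\lambda_k I_{d_k}, C_k)$, is controllable, and since $\lambda_k I_{d_k}$ is a scalar matrix the pair is controllable precisely when $\operatorname{rank} C_k = d_k$. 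Then for a single such block, after a further static transformation one reduces to the scalar ensemble $\{(\theta \lambda_k, 1) \,|\, \theta \in \mathbf{P}\}$ repeated $d_k$ times with independent inputs; here uniform ensemble controllability is exactly a scalar polynomial approximation statement on the compact set $\lambda_k \mathbf{P} \subset \R$, which holds by the Weierstrass approximation theorem. The disjointness hypothesis $\lambda_k \mathbf{P} \cap \lambda_l \mathbf{P} = \emptyset$ is what allows the separate polynomials obtained for different blocks to be amalgamated into one.

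The main obstacle is the reduction step in (b): $\theta\Lambda$ is block-diagonal, so Proposition~\ref{lem:sirect-sum} applies trivially to decouple the blocks, but within a block of size $d_k \geq 2$ the matrix $\theta \lambda_k I_{d_k}$ is \emph{scalar}, so there is no Kalman-type chain to exploit — one genuinely needs $B$ to supply $d_k$ independent input directions, and one must verify that controllability of the original $(A,B)$ delivers $\operatorname{rank} C_k = d_k$ via the Popov--Belevitch--Hautus criterion at the eigenvalue $\lambda_k$. A secondary technical point is checking that the disjointness condition is genuinely needed to glue the per-block polynomial approximants, for which the Stone--Weierstrass argument used in Theorem~\ref{MAIN} (approximating distinct polynomials on disjoint compact subsets of $\R$ by a single polynomial) carries over verbatim once the sets $\lambda_k\mathbf{P}$ are pairwise disjoint.
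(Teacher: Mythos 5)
Your part (a) is essentially the paper's argument: (E1) yields controllability of $(A,B)$, and (E2) rules out a singular $A$ because $0$ would be a common eigenvalue of $\theta_1A,\ldots,\theta_sA$. One small slip: (E2) concerns $s\ge m+1$ distinct parameters, so you should select $m+1$ of them rather than $n+1$; since $\mathbf{P}$ is infinite this is harmless.

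Part (b) departs from the paper and has a genuine gap at the decoupling step. After the change of coordinates the pair becomes $(\theta\Lambda,V^{-1}B)$ with $\Lambda$ block diagonal but $V^{-1}B$ in general a \emph{full} matrix. Proposition~\ref{lem:sirect-sum} applies to families in the form (\ref{eq:uppertriang}), where $B$ is also block upper triangular, so each diagonal block owns a dedicated input channel $B_{ii}u_i$ that does not feed the blocks below it; its sufficiency proof picks $u_N$, then $u_{N-1}$, etc., and this backward substitution collapses when every block is driven by every input. Indeed the implication you want --- each $(\theta\lambda_k I_{d_k},C_k)$ uniformly ensemble controllable implies the direct sum is --- is false without the disjointness hypothesis: for $\Lambda=\operatorname{diag}(1,2)$, $b=(1,1)^\top$, $\mathbf{P}=[1,3]$, each scalar ensemble is fine, but a single polynomial $p$ would have to satisfy both $p(2)\approx x_1^*(2)$ and $p(2)\approx x_2^*(1)$. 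So the blocks can only be decoupled at the level of the polynomial characterization in Theorem~\ref{prop:1}(c), prescribing the values of the $p_j$ separately on each set $\lambda_k\mathbf{P}$ and amalgamating; and for that step Weierstrass/Stone--Weierstrass on subsets of $\R$ is not the right tool, since the $\lambda_k$ may be complex, so $\lambda_k\mathbf{P}$ lies on a line in $\C$ and one needs complex approximation on a compact set with connected complement plus conjugate symmetry to obtain real polynomials. All of this is precisely the content of Theorem~\ref{MAIN}, and the paper's proof of (b) accordingly just verifies its hypotheses for $(\theta A,B)$: reachability from $0\notin\mathbf{P}$, constancy of the Hermite indices, $\sigma(\theta A)\cap\sigma(\theta'A)=\emptyset$ from $\lambda_k\mathbf{P}\cap\lambda_l\mathbf{P}=\emptyset$ and invertibility of $A$, and finally Remark~\ref{rem:2} in place of condition (iv), since $\theta A$ is diagonalized by the fixed matrix $V$ and $\C\setminus\bigcup_k\lambda_k\mathbf{P}$ is connected. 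I recommend rebuilding part (b) along those lines rather than re-deriving the approximation machinery by hand.
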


\begin{proof}
(a) Let the family $\Sigma=\{ (\theta A,B) \,|\,\theta \in \mathbf{P}\}$ be uniformly ensemble controllable. Then, by (E1)  the pair $(\theta A,B)$ is controllable for every $\theta \in \mathbf{P}$. Using the Kalman matrix we have $(A,B)$ is controllable. To show the second claim, suppose that  $\operatorname{rank}A<n$. Then zero is an eigenvalue of $A$ and for distinct parameter values $\{\theta_1,...,\theta_{m+1} \} \in \mathbf{P}$ we have $$0 \in \sigma\big(\theta_1A\big) \cap \cdots \cap \sigma\big(\theta_{m+1}A\big)$$ contradicting the necessary condition (E2).

 
(b) To show the claim, we verify the sufficient conditions of Theorem~\ref{MAIN}. The reachability of the pair $(A,B)$ and the fact that $0 \neq \mathbf{P}$ implies that $(\theta\,A,B)$ is reachable for every $\theta \in  \mathbf{P}$. 
Due to the fact that $0 \not \in \mathbf{P}$ the Hermite indices of $(\theta A, B)$ are independent of $\theta$. 
Moreover, as $\lambda_k \mathbf{P} \cap \lambda_l  \mathbf{P} = \emptyset$ for all $k\neq l \in \{1,...,r\}$ we have $\sigma\big( \theta A \big) \cap \sigma\big( \theta' A \big) = \emptyset$ for all $\theta \neq \theta' \in \mathbf{P}$. 
The assertion then follows from Remark~\ref{rem:2}.
\end{proof}

\section{Ensembles of networks of linear dynamical systems}

We consider ensembles of networks of $N$ identical
single-input-single-output systems $\Sigma=(A,b,c)$, whose dynamics
are given by
\begin{align*}
\dot x_i(t) &= A\,x_i(t) + b\,v_i(t)\\
y_i(t) &=c\, x_i(t).
\end{align*}
Here $A \in \mathbb{R}^{n \times n}$, $ b \in \mathbb{R}^{n}$ and $c \in \mathbb{R}^{1 \times n}$. 
We assume that $(A,b,c)$ is controllable and observable. The $N$
identical systems
$\Sigma$ are coupled via directed links and the interconnection
structure is
described by a directed graph $\mathcal{G}=({\cal
  V},\mathcal{E})$. Here $\mathcal{V}=\{1,\ldots,N \}$ denotes the set
of vertices, i.e. the $N$ SISO systems, and $\mathcal{E}$ describes
the set of edges, i.e. the couplings. We examine the situation where
the coupling strength is uncertain and is assume to vary over a
compact interval $\mathbf{P}\subset \mathbb{R}_+=(0,\infty)$. The
weighted graph adjacency matrix is given by 
\begin{align*}
\mathcal{K}(\theta)=  
\begin{cases}
 k_{ij}(\theta) & \text{ if } (i,j) \in \mathcal{E}\\
0 & \text{ else},
\end{cases}
\end{align*}
where $k_{ij}\colon \mathbf{P} \to (0,\infty)$ are known continuous functions, $(i,j)\in \mathcal{E}$. Thus, $
 \mathcal{K}_{\mathbf{P}} = \{\mathcal{K}(\theta)\, | \, \theta \in \mathbf{P} \}$ denotes a family of adjacency matrices. We assume that there is an external input $u$ which is broadcasted to the systems $\Sigma$ via the input-to-state interconnection vector $\mathcal{B} \in \mathbb{R}^N$. The setting is illustrated in Figure~\ref{fig:setting}.

\begin{figure}[h]
\centering
\vspace{.5cm}
\begin{tikzpicture}[thick,node distance=2.5cm,>=latex',scale=0.8]
\tikzstyle{block} = [draw, fill=blue!5, rectangle, minimum height=4em, minimum width=4em]
\node [block,node distance=3cm] (nodes) {$\begin{matrix} \Sigma &   &   \\  & \ddots & \\ & & \Sigma \end{matrix}$};
\node [block, below of=nodes] (network) {$\begin{matrix} \mathcal{K}(\theta) & \mathcal{B} \end{matrix}$};
\node (1) [rectangle,yshift=-18.35ex,xshift=4ex]{};
\node (2) [rectangle,yshift=-19ex,xshift=-4ex]{};
\node [right of=network,yshift=-3.25ex,xshift=2ex] (output) {$u$};
\node [left of=network,yshift=-2.5ex,xshift=-2ex] (input) {$ $};
\draw [<-] (1) -- node[name=u] {} (output);
\node (1nach2) [below of=nodes,yshift=-.6ex,xshift=12ex]{}; 
\node (2nach1) [below of= nodes,yshift=-.60ex,xshift=-12ex]{};
\draw[-] (nodes) -| (1nach2) node[right,midway] {};  
\draw[->] (1nach2) |- (network) node[right,midway] {$v $}; 
\draw[-] (network) -| (2nach1) node[left,yshift=1ex] {$ $}; 
\draw[->] (2nach1) |- (nodes) node[near end,above] {};
\end{tikzpicture}
\caption{Block diagram of an ensemble of homogenous networks with parameter-dependent couplings.}
\label{fig:setting}
\end{figure}
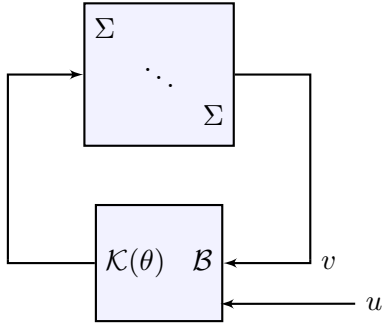

Thus, $\Sigma= \{ (\Sigma,\mathcal{K}(\theta),\mathcal{B}) , \, \theta \in \mathbf{P}\}$ defines an ensemble of networks. Using the state vector $x=\operatorname{col}(x_1\, \cdots \, x_N) \in \mathbb{R}^{nN}$, the network is described by the following control system
\begin{equation}\label{eq:ensemble-net-couplings}
\begin{split}
 \tfrac{\partial}{\partial t} x(t,\theta) &= \big( I \otimes A + \mathcal{K}(\theta)\otimes bc \big) x(t,\theta) + \big(\mathcal{B}\otimes b \big) u(t).
\end{split}
\end{equation}
Let $\mathbf{1}=(1\, \cdots \,1)^\top \in \mathbb{R}^N$ and  $x_i^0 \in \mathbb{R}^n$ denote the initial state of the $i$th node system and $x_0 = \operatorname{col}(x_1^0\, \cdots \,x_N^0) \in \R^{nN}$. The solution to \eqref{eq:ensemble-net-couplings} at time $T>0$ starting in $x_0$  under the interconnection $\mathcal{K}(\theta)$ and the input $u$ is denoted by  $\varphi(T,\mathcal{K}(\theta),x_0,u)$. 

We emphasize that the input $u$ is broadcasted to the systems $\Sigma$
within the network according to the input-to-state vector
$\mathcal{B}$. This phenomenon may be interpreted as that $u$ serves
as an universal input for a whole ensemble of networks that steers the
initial state $x_0$ to a desired terminal state $x^* \in
C(\mathbf{P},\R^{nN})$ in finite time $T$ uniformly for all
interconnection matrices $\mathcal{K}(\theta)$, $\theta \in
\mathbf{P}$. \medskip

The ensemble $\{ (\Sigma,\mathcal{K}(\theta),\mathcal{B})\,|\, \theta
\in \mathbf{P} \}$ is called \emph{robustly synchronizable} to $x^*
\in C(\mathbf{P},\mathbb{R}^{n})$ if for every $\varepsilon >0 $ there
is a $T>0$ and an input-function $u\in L^1([0,T], \mathbb{R})$ such that
\begin{align*}
\sup_{\theta \in \mathbf{P}} \| \varphi(T,\mathcal{K}(\theta),x_0,u) - (\mathbf{1} \otimes  x^*(\theta)) \| < \varepsilon.
\end{align*}

The reason for this terminology is that the desired states to which we
want to control have identical components, i.e.,
\[
\mathbf{1} \otimes  x^*(\theta)= \begin{pmatrix}
x^*(\theta)\\
\vdots\\
x^*(\theta)
\end{pmatrix}\in \R^{nN}.
\]

Thus the single input function achieves synchronization in finite time
$T$, starting from non-synchronuous initial condition 
$
x_0= \operatorname{col} (
x_1(0),
\cdots,
x_N(0)) \in \R^{nN}.
$
We emphasize that the inputs may depend both on the initial condition
$x_0$ and on the family of terminal state $x^*(\theta)$. \medskip
 
As an illustration we consider $N$ identical single-input-single-output (SISO) harmonic oscillators 
\begin{align}\label{def:A-b-c_1}
 A :=  
\begin{bmatrix}
 0 & -1\\
1 & 0
\end{bmatrix},
\quad 
b:=
\begin{bmatrix}
1 \\
0
\end{bmatrix},
\quad 
c:=
\begin{bmatrix}
0 & 1
\end{bmatrix}.
\end{align}
The oscillators are coupled in
a circular manner. The network topology is described by a directed
graph $\mathcal{G}$ with $N$ nodes. The weighted adjacency matrix
${\cal K}(\theta)$ is given by the circulant matrix
\begin{align}\label{eq:ring-adjacency}
 {\cal K}_{\text{ring}}(\theta):=  
\begin{bmatrix}
0 & \theta &   &  \\
  & \ddots & \ddots  & \\
  &  &  \ddots & \theta  \\
\theta & &   & 0 \\
\end{bmatrix}, \quad \theta \in \mathbf{P}:=[\theta^-,\theta^+].
\end{align}
The network is depicted in Figure~\ref{fig:diring}.  

\begin{figure}[h!]
\vspace{.5cm}
\begin{center}
\begin{tikzpicture}[scale=.845]
\def \n {5}
\def \radius {1.95cm}
\def \margin {15} 
\foreach \s in {1,...,\n}
{
  \node at ({360/\n * (\s -1) + 40}:\radius + .25cm) {$\theta$};
  \node[draw, circle] at ({360/\n * (\s - 1)}:\radius) {$\Sigma$};
  \draw[<-, >=latex] ({360/\n * (\s - 1)+\margin}:\radius) 
    arc ({360/\n * (\s - 1)+\margin}:{360/\n * (\s)-\margin}:\radius);
}
\end{tikzpicture}
\end{center}
\caption{Ensemble of rings of $5$ identical harmonic oscillators.}
\label{fig:diring}
\end{figure}
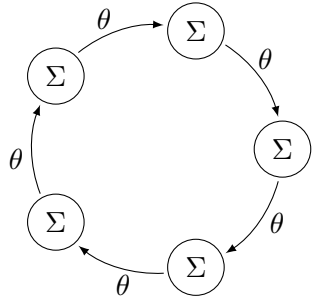
Without loss of generality, assume that the harmonic oscillators are
numbered be such that the external input is applied to the first oscillator. Thus, the input-to-state interconnection vector is $\mathcal{B}=e_1=(1,0,...,0)^\top$. The dynamics of the overall network of systems is of the form
\begin{align}\label{eq:diring-nw}
\begin{split}
 \tfrac{\partial}{\partial t} x(t,\theta) &= \big( I \otimes A + {\cal K}_{\text{ring}}(\theta)\otimes bc\big) x(t) + \big(e_1\otimes b \big) u(t)\\
 x(0,\theta)&= x_0\in \mathbb{R}^{2nN}.
\end{split}
\end{align}
Let $x^* \in C(\mathbf{P},\mathbb{R}^2)$ denote the desired terminal
states of the harmonic oscillators. 
\medskip

\begin{proposition}\label{prop:hos} Let $\textbf{P}$ denote a
compact interval in $(0,1)\cup (1,\infty)$. 
The circular network ensemble of harmonic oscillators (\ref{eq:diring-nw})
is robustly synchronizable to $\mathbf{1}\otimes x^*$. 
\end{proposition}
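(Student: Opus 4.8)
The strategy is to reduce the synchronization problem for the circular network \eqref{eq:diring-nw} to an application of Theorem~\ref{MAIN} by diagonalizing the circulant coupling matrix. First I would diagonalize $\mathcal{K}_{\text{ring}}(\theta)$ by the (parameter-independent) discrete Fourier matrix $F$: since $\mathcal{K}_{\text{ring}}(\theta)=\theta C$ where $C$ is the basic cyclic permutation circulant, we have $F^{-1}\mathcal{K}_{\text{ring}}(\theta)F = \theta\,\Omega$ with $\Omega=\operatorname{diag}(1,\omega,\dots,\omega^{N-1})$, $\omega=e^{2\pi i/N}$. Conjugating \eqref{eq:diring-nw} by $F\otimes I_n$ transforms the system into block-diagonal form with blocks
\begin{equation*}
A + \theta\,\omega^{j}\,bc, \qquad j=0,\dots,N-1,
\end{equation*}
while the input vector $e_1\otimes b$ becomes $(F^{-1}e_1)\otimes b = \tfrac1{\sqrt N}\mathbf{1}\otimes b$, so each Fourier mode is actuated (the $j$-th block gets input $b$ up to the nonzero scalar $1/\sqrt N$). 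Hence the transformed ensemble is a direct sum of the $N$ single-input ensembles $(\theta\mapsto A+\theta\omega^{j}bc,\ b)$, and by Proposition~\ref{lem:sirect-sum} it suffices to prove uniform ensemble controllability of each of these $N$ two-dimensional single-input families over $\theta\in\mathbf{P}$.

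Next I would verify conditions (i)--(iv) of Theorem~\ref{MAIN} for each block $(A+\theta\omega^{j}bc,\ b)$. With the specific $A,b,c$ from \eqref{def:A-b-c_1} one computes
\begin{equation*}
A+\theta\omega^{j}bc=\begin{pmatrix}0 & \theta\omega^{j}-1\\ 1 & 0\end{pmatrix},
\end{equation*}
whose characteristic polynomial is $z^2-(\theta\omega^{j}-1)$; reachability with $b=e_1$ is immediate since the Kalman matrix is $\begin{pmatrix}1 & 0\\ 0 & 1\end{pmatrix}$, so (i) holds, and the single Hermite index is constantly $2$, giving (ii). For (iii) and (iv) I need the two eigenvalues $\pm\sqrt{\theta\omega^{j}-1}$ to be distinct for each fixed $\theta$ (i.e.\ $\theta\omega^{j}\neq 1$) and the spectra to be disjoint for distinct $\theta,\theta'\in\mathbf{P}$. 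Here the hypothesis $\mathbf{P}\subset(0,1)\cup(1,\infty)$ enters: for $j=0$ we need $\theta\neq 1$, which is exactly the exclusion of $1$ from $\mathbf{P}$; for $j\neq 0$, $\theta\omega^{j}$ is non-real, hence $\neq 1$. Disjointness of spectra across parameters reduces, within a fixed mode $j$, to injectivity of $\theta\mapsto\{\pm\sqrt{\theta\omega^{j}-1}\}$, i.e.\ to $\theta\omega^{j}-1\neq\theta'\omega^{j}-1$, which is clear; and across different modes the question does not arise, since Proposition~\ref{lem:sirect-sum} only requires each diagonal block to be ensemble controllable separately.

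Having established uniform ensemble controllability of the Fourier-transformed system, I would transfer the conclusion back. Given $x^*\in C(\mathbf{P},\R^2)$ and the initial condition $x_0$, the target in transformed coordinates for mode $j$ is the $j$-th block of $(F^{-1}\otimes I_n)(\mathbf{1}\otimes x^*(\theta))$, which by $F^{-1}\mathbf{1}=\sqrt N e_1$ is $\sqrt N\,x^*(\theta)$ in the zeroth mode and $0$ in all others; the effect of the nonzero initial condition is absorbed by the usual variation-of-constants shift of the target function (it only changes $x^*$ by a continuous-in-$\theta$ term, as in the proof of Proposition~\ref{lem:sirect-sum}). Applying uniform ensemble controllability mode-by-mode yields a single parameter-independent input $u$ achieving the transformed target to within $\varepsilon/\|F\otimes I_n\|$, and conjugating back by $F\otimes I_n$ gives \eqref{eq:openloop} for the original network, i.e.\ robust synchronizability to $\mathbf{1}\otimes x^*$. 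The main obstacle is the bookkeeping around the non-real modes: one must make sure that conditions (iii)--(iv) of Theorem~\ref{MAIN} are genuinely satisfied for every complexified block over the real interval $\mathbf{P}$, and in particular that the exclusion of $\theta=1$ is exactly what is needed to keep $A+\theta bc$ from acquiring a double eigenvalue $0$ in the $j=0$ mode; everything else is routine linear algebra and the direct-sum reduction.
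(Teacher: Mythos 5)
Your overall route (diagonalize the circulant coupling by the DFT matrix, compute the mode spectra $\pm\sqrt{\theta\omega^{j}-1}$, and invoke Theorem~\ref{MAIN}) is the same as the paper's, and your computations of the blocks, the characteristic polynomials, and the role of excluding $\theta=1$ are all correct. But there is a genuine gap in the middle of your argument: you reduce to the $N$ individual modes via Proposition~\ref{lem:sirect-sum} and then assert that ``across different modes the question does not arise.'' Proposition~\ref{lem:sirect-sum} applies to a block-triangular pair $(A,B)$ in which \emph{both} matrices are block-triangular, so that each diagonal block has its own independent input channel $u_i$; its proof chooses $u_2$ first and then uses the free channel $u_1$ to correct block~1. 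After the Fourier transform your system is block-diagonal in $A$ but has the single column $\tfrac{1}{\sqrt N}\mathbf{1}\otimes b$ as input matrix: one scalar input $u$ is broadcast identically to all $N$ modes, and you cannot choose the mode inputs separately. Uniform ensemble controllability of each mode does not imply it for the parallel connection under a shared input (two identical controllable SISO blocks driven by the same scalar input are not even pointwise reachable), so the reduction is invalid as stated.

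Concretely, the cross-mode conditions you dismiss are exactly what must be verified: for the full $2N$-dimensional single-input ensemble, condition (i) of Theorem~\ref{MAIN} (reachability of the parallel connection for each fixed $\theta$) requires $\sigma(A+\theta\omega^{l}bc)\cap\sigma(A+\theta\omega^{k}bc)=\emptyset$ for $l\neq k$, and conditions (iii)--(iv) require disjointness of the full spectra across distinct $\theta,\theta'$ and simplicity of all $2N$ eigenvalues. This is what the paper checks, via \eqref{spec-ring-trafo}: within one $\theta$ the values $\theta\omega^{l}-1$ are pairwise distinct and nonzero, and across $\theta\neq\theta'$ one has $\theta\omega^{l}\neq\theta'\omega^{k}$ because $|\theta\omega^{l}|=\theta\neq\theta'=|\theta'\omega^{k}|$. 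The danger of skipping this is illustrated by the paper's own remark on the symmetric circulant \eqref{symcir} with $N=4$, where coinciding mode eigenvalues destroy robust synchronizability. Your proof becomes correct if you delete the appeal to Proposition~\ref{lem:sirect-sum} and instead verify (i)--(iv) of Theorem~\ref{MAIN} for the whole block-diagonal single-input system, adding the cross-mode checks above. (A small side remark: for even $N$ and $j=N/2$ the number $\theta\omega^{j}=-\theta$ is real, so your phrase ``non-real, hence $\neq 1$'' needs the extra observation that it is negative.)
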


\medskip
\begin{proof}
Let $\omega:=\operatorname{e}^{2\pi i\tfrac{1}{N}}$ denote the
primitive $N$-th root of unity. The adjacency matrix is a circulant matrix with spectrum  
\begin{align*}
\sigma({\cal K}(\theta)) = \left\{ \theta  \operatorname{e}^{2\pi i\tfrac{l}{N}}  \, \big|\, l=0,...,N-1\right\}.
\end{align*}
The family $\{ {\cal K}(\theta)\,|\, \theta \in \mathbf{P}\}$ of
circulant matrices is simultaneously diagonalizable using the unitary 
Vandermonde matrix
\begin{align}\label{eq:S-circ}
S=
\begin{bmatrix}
1 & 1 & \cdots  & 1 \\
1  & \omega & \cdots  & \omega^{N-1}\\
\vdots &  \vdots & \cdots & \vdots  \\[1ex]
1 & \omega^{N-1}&  \cdots & \omega^{(N-1)^2}  \\
\end{bmatrix}.
\end{align}
Applying the change of coordinates $S\otimes I$ to the network
dynamics yields the state space equivalent system
\begin{equation}\label{ring-diag}
\begin{split}
\tfrac{\partial}{\partial t} {z}(t,\theta)& = 
\operatorname{diag}\left(
A+\lambda_l(\theta)bc \right)
x(t) + 
\left( \mathbf{1} \otimes b \right) u(t)\\
 z(0,\theta)&= (S \otimes I ) x_0.
\end{split}
\end{equation}
Since $S\mathbf{1} = Ne_1$, the corresponding desired terminal states are $Ne_1 \otimes x^*(\theta) \in C(\mathbf{P},\mathbb{R}^{2N})$. We show that Theorem~\ref{MAIN} can be applied to \eqref{ring-diag}.
A simple calculation shows that
\begin{align}\label{spec-ring-trafo}
\bigcup_{l=1}^N \sigma \left(
A+\lambda_l(\theta)bc \right)= 
\bigcup_{l=1}^N \left\{ w \in \mathbb{R} \,  \Big|\,  w^2 - (\theta  \operatorname{e}^{2\pi i\tfrac{l}{N}} -1) =0   \right\}.
\end{align}
Thus
\begin{align*}
 \sigma( A + \lambda_l(\theta)bc) \cap  \sigma( A + \lambda_k(\theta)bc) = \emptyset 
\end{align*}
holds for all $\theta \in \mathbf{P}$ and $l \neq k \in
\{1,...,N\}$. Moreover, since $(A+\lambda_l(\theta)bc,b)$ is
controllable for all $\theta \in \mathbf{P}$ and $l \in \{1,...,n\}$,
we conclude that the parallel connected system \eqref{ring-diag} is
controllable. Furthermore, by inspection from \eqref{spec-ring-trafo},
conditions~(iii) and (iv) in Theorem~\ref{MAIN} are satisfied. This
completes the proof. 
\end{proof}

\medskip

Proposition \label{prop:hos} corrects an error in \cite{mtns2014},
where the corresponding result was claimed for undirected circular
graphs, with the symmetric circulant adjacency matrix
\begin{align}\label{symcir} 
\begin{bmatrix}
0 & \theta &   & \theta \\
 \theta & \ddots & \ddots  & \\
  &\ddots &  \ddots & \theta  \\
\theta & &  \theta & 0 \\
\end{bmatrix}.
\end{align}
This matrix has real eigenvalues $\theta \cos\left({2\pi
    \tfrac{l}{N}}\right)\,,\, l=0,...,N-1$. Thus, for $N=4$, the
    necessary condition (E2) is not satisfied as
    $\cos(\tfrac{\pi}{2})= \cos(\tfrac{3\pi}{2})=0$. Therefore, the
    ring of oscillators with symmetric coupling matrix (\ref{symcir})
    is not robust synchronizable.\bigskip


Next, we discuss a scenario where the node systems depend on a
single parameter and are interconnected by a fixed graph adjacency
matrix $\mathcal{K}$. That is, let
$\Sigma(\theta)=(A(\theta),b(\theta),c(\theta))$ be an ensemble of SISO  systems,
%
%
where $A\in C(\mathbf{P},\mathbb{R}^{n \times n})$, $ b \in
C(\mathbf{P},\mathbb{R}^{n})$ and $c \in C(\mathbf{P},\mathbb{R}^{1
  \times n})$. We assume that the system
$(A(\theta),b(\theta),c(\theta))$ is controllable and observable for
every $\theta \in \mathbf{P}$. Let $\mathcal{G}=({\cal
  V},\mathcal{E})$ be a directed graph with a fixed weighted adjacency matrix $\mathcal{K} \in \mathbb{R}^{N \times N}$. 

\begin{figure}[h!]
\centering
\vspace{.5cm}
\begin{tikzpicture}[thick,node distance=2.5cm,>=latex',scale=0.8]
\tikzstyle{block} = [draw, fill=blue!5, rectangle, minimum height=4em, minimum width=4em]
\node [block,node distance=3cm] (nodes) {$\begin{matrix} \Sigma(\theta) &   &   \\  & \ddots & \\ & & \Sigma(\theta) \end{matrix}$};
\node [block, below of=nodes] (network) {$\begin{matrix} \mathcal{K} & \mathcal{B}  \end{matrix}$};
\node (1) [rectangle,yshift=-18.35ex,xshift=4ex]{};
\node (2) [rectangle,yshift=-19ex,xshift=-4ex]{};
\node [right of=network,yshift=-3.25ex,xshift=3ex] (output) {$u$};
\node [left of=network,yshift=-2.5ex,xshift=-2ex] (input) {$ $};
\draw [<-] (1) -- node[name=u] {} (output);
\node (1nach2) [below of=nodes,yshift=-0.6ex,xshift=12ex]{}; 
\node (2nach1) [below of= nodes,yshift=-0.60ex,xshift=-12ex]{};
\draw[-] (nodes) -| (1nach2) node[right,midway] {};  
\draw[->] (1nach2) |- (network) node[right,midway] {$v$}; 
\draw[-] (network) -| (2nach1) node[left,yshift=1ex] {}; 
\draw[->] (2nach1) |- (nodes) node[near end,above] {};
\end{tikzpicture}
\caption{Block diagram of the ensemble of homogenous networks with parameter-dependent node systems.}
\label{fig:setting}
\end{figure}
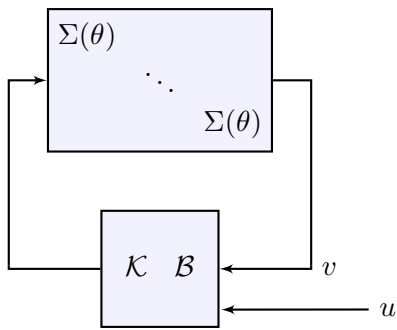

Apply a single, parameter-independent external input $u$ to the
systems $\Sigma(\theta)$, using an input-to-state interconnection
vector $\mathcal{B} \in \mathbb{R}^N$. Let $x=\operatorname{col}(x_1\,
\cdots \, x_N) \in \mathbb{R}^{nN}$ denote the sate vector of the
network. The ensemble of networks we are interested in is described by the control system
\begin{equation}\label{eq:ensemble-net-node-p}
 \tfrac{\partial}{\partial t} x(t,\theta) = \big( I \otimes A( \theta) + \mathcal{K}\otimes b(\theta)c(\theta) \big) x(t,\theta) + \big(\mathcal{B}\otimes b(\theta) \big) u(t).
\end{equation}
We note that \eqref{eq:ensemble-net-node-p} is a special case of the
more general setting described in \cite{fuhrmann2012reachability}. In
particular, the controllability properties of the network are easily
established. 
In fact, (\ref{eq:ensemble-net-node-p}) is controllable if and only if
$({\cal K},{\cal B})$ and $(A(\theta),b(\theta),c(\theta))$ is
controllable for all $\theta$; cf. \cite{fuhrmann2012reachability}. In
order to simplify the analysis, we assume that the adjacency matrix
${\cal K}$ has distinct eigenvalues $\lambda_1,...,\lambda_N$, which
implies that there is an invertible matrix $S$ with $S{\cal K}S^{-1} = {\cal D}
=\operatorname{diag}(\lambda_1,...,\lambda_N)$.  Using the change of
coordinates $z=(S \otimes I)x$, system \eqref{eq:ensemble-net-node-p}
is state-space equivalent to
\begin{equation}\label{eq:ensemble-net-node}
\begin{split}
 \tfrac{\partial}{\partial t} z(t,\theta) &= \big( I \otimes A( \theta) + \mathcal{D}\otimes b(\theta)c(\theta) \big) z(t,\theta) + \big(S\mathcal{B}\otimes b(\theta) \big) u(t)
\end{split}.
\end{equation}
Note that $\lambda_k\neq \lambda_l$ for all $k\neq l \in
\{1,....,N\}$. System (\ref{eq:ensemble-net-node}) is equivalent to
\begin{align*}
 \tfrac{\partial}{\partial t} z(t,\theta) =  
\begin{pmatrix}
 A( \theta) + \lambda_1 b(\theta)c(\theta)  &  & \\
    & \ddots &   \\
     & & A( \theta) + \lambda_N b(\theta)c(\theta) 
\end{pmatrix} z(t,\theta)
+ 
\big( S{\cal B} \otimes b \big)
u(t).
\end{align*}
Thus the spectrum of $ I \otimes A( \theta) + \mathcal{K}\otimes b(\theta)c(\theta)$  is
\begin{align*}
 \sigma \left(  A( \theta) + \lambda_1  \, b(\theta)c(\theta) \right) \cup \cdots \cup  \sigma \left(  A( \theta) + \lambda_N\, b(\theta)c(\theta) \right).
\end{align*}
Since $(A(\theta),b(\theta),c(\theta))$ is controllable and observable
we obtain  
\begin{align*}
 \operatorname{det} \left( zI- \big(A(\theta) - \lambda_k\, b(\theta)c(\theta) \big) \right) = q_\theta (z) + \lambda_k \, p_\theta (z)
\end{align*}
with coprime polynomials $p_\theta(z), q_\theta(z)$. This shows, for
fixed $\theta \in \mathbf{P}$ and $\lambda_k \neq \lambda_l$, that there is no $z \in \mathbb{C}$ such that
\begin{align*}
 q_\theta (z) + \lambda_k\,  p_\theta (z) =  q_\theta (z) + \lambda_l \, p_\theta (z) .
\end{align*}
Therefore, one can apply Theorem~\ref{MAIN}, which proves the next result.

\begin{proposition}
Let $\mathbf{P}\subset \R$ be compact. Suppose the system
$\Sigma(\theta)=(A(\theta),b(\theta),c(\theta))$ is controllable and observable for every $\theta \in \mathbf{P}$. Assume that the pair $({\cal K},{\cal B}) \in \R^{n \times (n+1) }$ is controllable and ${\cal K}$ is diagonalizable.  Then the ensemble of networks \eqref{eq:ensemble-net-node-p} is robustly synchronizable if 
the eigenvalues of $A( \theta) + \lambda  \, b(\theta)c(\theta) $ have algebraic multiplicity one for each $\theta \in \mathbf{P}$ and $\lambda \in \sigma({\cal K})$ and for each $\theta \neq \theta' \in \mathbf{P}$ and $\lambda_l \neq \lambda_k \in \sigma({\cal K})$ it holds
\begin{align*}
 \sigma \left(  A( \theta) + \lambda_l  \, b(\theta)c(\theta) \right) \cap  \sigma \left(  A( \theta') + \lambda_k\, b(\theta')c(\theta') \right) = \emptyset.
\end{align*}

\end{proposition}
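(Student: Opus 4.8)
The plan is to reduce the robust synchronization problem for the network ensemble \eqref{eq:ensemble-net-node-p} to a direct application of Theorem~\ref{MAIN}, by passing through the diagonalizing coordinate change already carried out in the excerpt. First I would note that, since $({\cal K},{\cal B})$ is controllable and ${\cal K}$ is diagonalizable with distinct eigenvalues $\lambda_1,\dots,\lambda_N$, the similarity transformation $z=(S\otimes I)x$ brings the network into the block-diagonal form
\[
 \tfrac{\partial}{\partial t} z(t,\theta) =
\operatorname{diag}\big( A(\theta)+\lambda_k\,b(\theta)c(\theta) \big)_{k=1}^N \, z(t,\theta) + \big( S{\cal B}\otimes b(\theta)\big) u(t),
\]
and that this is precisely an upper (in fact block) triangular ensemble as in \eqref{eq:uppertriang}. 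By Proposition~\ref{lem:sirect-sum}, uniform ensemble controllability of the whole family is equivalent to uniform ensemble controllability of each diagonal block $\big(A(\theta)+\lambda_k\,b(\theta)c(\theta),\,(S{\cal B})_k\,b(\theta)\big)$; since $({\cal K},{\cal B})$ controllable forces every component $(S{\cal B})_k$ to be nonzero, each block is effectively a single-input ensemble $\big(A(\theta)+\lambda_k\,b(\theta)c(\theta),\,b(\theta)\big)$ up to a harmless nonzero scaling of the input.

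Next I would verify the four hypotheses of Theorem~\ref{MAIN} for each such single-input block. Condition~(i), reachability for every $\theta$, follows because $(A(\theta),b(\theta),c(\theta))$ is controllable and observable: the formula $\det\big(zI-(A(\theta)+\lambda_k b(\theta)c(\theta))\big)=q_\theta(z)+\lambda_k p_\theta(z)$ with $p_\theta,q_\theta$ coprime (derived in the excerpt) shows the closed-loop pair $(A(\theta)+\lambda_k b(\theta)c(\theta),b(\theta))$ remains controllable, as the output-feedback-type modification $\lambda_k b c$ does not change the reachable subspace. Condition~(ii), constancy of the Hermite indices, is automatic for single-input reachable pairs since the only Hermite index is $K_1(\theta)\equiv n$. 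Condition~(iv), algebraic multiplicity one for the eigenvalues of each $A(\theta)+\lambda_k b(\theta)c(\theta)$, is exactly one of the stated hypotheses. The remaining condition~(iii), disjointness of spectra for distinct parameters, is where the two hypotheses of the Proposition combine: for $\theta\neq\theta'$ in the same block $k$ one needs $\sigma\big(A(\theta)+\lambda_k b(\theta)c(\theta)\big)\cap\sigma\big(A(\theta')+\lambda_k b(\theta')c(\theta')\big)=\emptyset$, while across distinct blocks $k\neq l$ one needs the stated cross-disjointness $\sigma\big(A(\theta)+\lambda_l b(\theta)c(\theta)\big)\cap\sigma\big(A(\theta')+\lambda_k b(\theta')c(\theta')\big)=\emptyset$ for all $\theta\neq\theta'$.

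The main obstacle, and the place requiring care, is that the spectrum hypothesis as literally stated in the Proposition only excludes coincidences between \emph{different} blocks at \emph{different} parameters; to run Theorem~\ref{MAIN} on the combined ensemble one must treat the block index $k$ together with $\theta$ as a single extended ``parameter'' living on a disjoint union of $N$ copies of $\mathbf{P}$, so that the genuine requirement is disjointness of $\sigma\big(A(\theta)+\lambda_l b(\theta)c(\theta)\big)$ and $\sigma\big(A(\theta')+\lambda_k b(\theta')c(\theta')\big)$ over all pairs $(\theta,l)\neq(\theta',k)$. The within-block case $\theta\neq\theta'$, $k=l$ should be read as part of the hypothesis (it is the natural $l=k$ companion of the stated condition, and is what the excerpt's coprimeness argument with the $\lambda_k$-parametrized characteristic polynomial is designed to handle), and the same-parameter different-block case $\theta=\theta'$, $k\neq l$ follows from the coprimeness of $p_\theta,q_\theta$ exactly as shown in the paragraph preceding the Proposition, since $q_\theta(z)+\lambda_k p_\theta(z)=q_\theta(z)+\lambda_l p_\theta(z)$ would force $p_\theta(z)=0=q_\theta(z)$, impossible for coprime polynomials.

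Once (i)--(iv) are checked for this extended single-input ensemble indexed by the disjoint union $\bigsqcup_{k=1}^N \mathbf{P}$, Theorem~\ref{MAIN} yields uniform ensemble controllability of the block-diagonal system; transporting back through $S\otimes I$ and Theorem~\ref{prop:1} gives uniform ensemble controllability of \eqref{eq:ensemble-net-node-p}. Finally, specializing the target to the synchronized family $\mathbf{1}\otimes x^*(\theta)$ — whose image under $S\otimes I$ is $S\mathbf{1}\otimes x^*(\theta)$, a perfectly valid element of $C(\mathbf{P},\R^{nN})$ — shows that the ensemble of networks is robustly synchronizable, which is the assertion.
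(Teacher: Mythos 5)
Your proposal follows essentially the same route as the paper: diagonalize $\mathcal{K}$ via $S\otimes I$, observe that the transformed system is a parallel connection of the single-input families $\bigl(A(\theta)+\lambda_k\,b(\theta)c(\theta),\,(S\mathcal{B})_k\,b(\theta)\bigr)$, and apply Theorem~\ref{MAIN}; the paper's own ``proof'' is precisely the discussion preceding the Proposition followed by the sentence ``one can apply Theorem~\ref{MAIN}.'' Two remarks. First, your invocation of Proposition~\ref{lem:sirect-sum} is a misapplication: that result concerns block-triangular pairs in which \emph{both} $A$ and $B$ carry the block structure, so that each diagonal block has its own independent input channel. Here all $N$ blocks share the single scalar input $u$ (the input matrix $S\mathcal{B}\otimes b(\theta)$ is one column spread over all blocks), and ensemble controllability of each block separately does \emph{not} imply ensemble controllability of the parallel connection -- that is exactly why the cross-block spectral disjointness is needed at all. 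Fortunately this step is not load-bearing: your subsequent reformulation as a single-input ensemble over the disjoint union $\bigsqcup_{k=1}^{N}\mathbf{P}$ (a finite union of disjoint compact intervals, the setting of Theorem~\ref{MAIN}) correctly encodes the shared input, and it is the argument you actually use. Second, you are right to flag that the hypothesis as literally stated, quantified over $\theta\neq\theta'$ \emph{and} $\lambda_l\neq\lambda_k$, omits the same-block case $l=k$, $\theta\neq\theta'$, which condition~(iii) of Theorem~\ref{MAIN} for the combined ensemble genuinely requires; the paper's coprimeness argument only covers the complementary case $\theta=\theta'$, $l\neq k$, and then glosses over this point. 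Your reading of the within-block disjointness into the hypothesis is the correct patch, and with it the verification of (i)--(iv) and the final specialization to targets $S\mathbf{1}\otimes x^*(\theta)$ are sound.
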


\section{Conclusions}

An approximation theoretical approach to controlling state ensembles of linear systems has been proposed. We concluded that the ensemble control problem for continuous-time and discrete-time systems is equivalent. For analytic parameter-dependent linear systems it is shown that uniform ensemble controllability is not possible for more than three parameters. A complete characterization of uniform ensemble controllability is presented for a special class of ensembles. An application to robust synchronization using broadcasted open-loop controls is given. Our approach is based on information of the spectrum of the system matrices (see assumption (iv) in Theorem~\ref{MAIN}). An open problem is to find relaxed necessary and sufficient conditions using the concept of pseudospectra.


\section*{Acknowledgements}

This research is partially supported  by the grant HE
1858/13-1 from the German Research Foundation. Parts of this work has been presented at the 8th Workshop
Structural Dynamical Systems: Computational Aspects (SDS2014). We thank the organizers for this beautiful workshop.


\end{document}